\numberwithin{equation}{section}
\newtheorem{thm}{Theorem}[section]
\newtheorem{cor}[thm]{Corollary}
\newtheorem{prop}[thm]{Proposition}
\theoremstyle{definition}
\newtheorem{definition}[thm]{Definition}
\newtheorem{hyp}[thm]{Assumption}
\theoremstyle{remark}
\newtheorem{rmq}[thm]{Remark}
\def \Var {\textrm{Var}}
\def \Cov {\textrm{Cov}}
\newcommand{\N}{\mathbb{N}}
\newcommand{\R}{\mathbb{R}}
\newcommand{\1}{\mathds{1}}
\newcommand{\Proba}{\mathbb{P}} 
\newcommand{\E}{\mathbb{E}} 
\newcommand{\Exp}{\mathcal{E}} 
\newcommand{\cvg}[3][ ]{ \underset{#2 \to #3}{\overset{#1}{\longrightarrow}}}
\DeclareMathOperator{\e}{e}
\newcommand{\mlaw}{\bold{m}}
\newcommand{\tpos}{t \geq 0}
\begin{document}
\date{\today}

\title[Limit theorem for Hawkes processes]{Limit theorems for Hawkes processes including inhibition.}

 \author[P. Cattiaux]{\textbf{\quad {Patrick} Cattiaux  \, }}
\address{{\bf {Patrick} CATTIAUX},\\ Institut de Math\'ematiques de Toulouse. CNRS UMR 5219. \\
Universit\'e Paul Sabatier,
\\ 118 route
de Narbonne, F-31062 Toulouse cedex 09.} \email{patrick.cattiaux@math.univ-toulouse.fr}

 \author[L. Colombani]{\textbf{\quad {Laetitia} Colombani  \, }}
\address{{\bf {Laetitia} COLOMBANI},\\ Institut de Math\'ematiques de Toulouse. CNRS UMR 5219. \\
Universit\'e Paul Sabatier,
\\ 118 route
de Narbonne, F-31062 Toulouse cedex 09.} \email{laetitia.colombani@math.univ-toulouse.fr}

 \author[M. Costa]{\textbf{\quad {Manon} Costa  \, }}
\address{{\bf {Manon} COSTA},\\ Institut de Math\'ematiques de Toulouse. CNRS UMR 5219. \\
Universit\'e Paul Sabatier,
\\ 118 route
de Narbonne, F-31062 Toulouse cedex 09.} \email{manon.costa@math.univ-toulouse.fr}

\maketitle

 \begin{center}

 \textsc{Universit\'e de Toulouse}
\smallskip

\end{center}

\begin{abstract}
In this paper we consider some non linear Hawkes processes with signed reproduction function (or memory kernel) thus exhibiting both self-excitation and inhibition. We provide a Law of Large Numbers, a Central Limit Theorem and large deviation results, as time growths to infinity. The proofs lie on a renewal structure for these processes introduced in \cite{costa} which leads to a comparison with cumulative processes. Explicit computations are made on some examples. Similar results have been obtained in the literature for self-exciting Hawkes processes only.
\end{abstract}
\bigskip

\textit{ Key words : Hawkes processes, inhibition, renewal theory, limit theorems. }

\bigskip

\textit{ MSC 2010 : 60G55, 60F05, 60K15}.
\bigskip

\section{Introduction.}\label{secintro}

Hawkes processes have been introduced by Hawkes \cite{hawkes71} and are widely used for modeling purposes: originally as models for the appearances of earthquakes \cite{hawkes71,hawkes74}, but now in finance \cite{hawkes18,bacry14} and econometrics or in neuroscience as models of spike trains of neurons \cite{evamas,RB}. We refer to the bibliography of our references for more details.
\medskip

A Hawkes process $t \mapsto N^h_t = N^h([0,t])$ is a point process on the real line $\mathbb R$ characterized by its initial condition on $]-\infty,0]$ and its intensity process $t \mapsto \Lambda(t)$ through the infinitesimal relation $$\mathbb P(N^h_. \textrm{ has a jump in } ]t,t+dt[|\mathcal F_t) \, = \, \Lambda(t) \, dt\,,$$ where $\mathcal F_t = \sigma (N^h(]-\infty,s[ \, ; \, s \leq t))$ is the natural filtration of the process and 
\begin{equation}\label{eqintens}
\Lambda(t) \, = \, f\left(\lambda \, + \, \int_{]-\infty,t[} \, h(t-s) \, N^h(ds)\right) \, .
\end{equation}
Here $\lambda \in \mathbb R$, $f: \mathbb R \to \mathbb R^+$ is the \emph{jump rate function} and $h: \mathbb R^+ \to \mathbb R$ is the \emph{reproduction function} (or \emph{memory kernel}). We shall give a more precise definition in the next section (in particular on what happens before time $0$) as well as results on existence and stability.
\smallskip

When $f$ is linear or affine, the process is said to be linear. In this case one has to assume that $\lambda \geq 0$ and $h \geq 0$ too. Note that when $h$ vanishes identically we recover a standard Poisson process. Otherwise the Hawkes process is called non linear. Actually, except for the  behaviour of the shifted process (\cite{Mas98,costa}), very few papers are dealing with possibly negative or signed $h$. The negative part of $h$ can be interpreted as self-inhibition.
\medskip

It is very natural to look at the large time behaviour of $N^h_.$, in particular the Law of Large Numbers (LLN) the Central Limit Theorem (CLT) and the deviations from the asymptotic mean or more generally the large deviations (LD). 

In the linear case (recall that $h$ is thus assumed to be non-negative) and assuming that $\parallel h\parallel_{L^1(du)} < 1$, both the LLN 
\begin{equation}\label{eq:lln_positive}\frac{N^h_t}{t} \, \to \, \frac{\lambda}{1-\parallel h\parallel_{L^1(du)}} := \, \mu \quad a.s. \textrm{ as } t \to +\infty \; ,
\end{equation}
and the CLT $$\frac{N^h_t \, - \, \mu t}{\sqrt t} \, \Longrightarrow \, \mathcal N^h(0,\sigma^2) \quad \textrm{ with } \sigma^2 \, = \, \frac{\lambda}{(1-\parallel h\parallel_{L^1(du)})^3} \; ,$$ where the convergence holds in distribution, have been shown (see e.g. \cite{DVJ}). Actually Bacry and al \cite{bacry13} have obtained the functional version of the CLT (convergence to some Brownian motion) in the multivariate case.  In a different direction, \cite{gaozhu} have shown a CLT for fixed $t$ as $\lambda \to +\infty$.

The easiest way to derive LLN and CLT in the linear case is presumably to use the immigration-birth representation also called the cluster process representation in \cite{hawkes74}, connecting $N^h$ to subcritical Galton-Watson processes. This representation was used in \cite{bordenave} in order to get the Large Deviation (LD) principle for $N^h_t/t$ with rate function $$I(x)=x \, \ln\left(\frac{x}{\lambda + x \, \parallel h\parallel_{L^1(du)}}\right) - x (1-\parallel h\parallel_{L^1(du)}) + \lambda \, .$$ For this explicit expression of the rate function see \cite{zhu13} p.761. The LD principle is obtained in \cite{bordenave} under the additional assumption $\int_0^{+\infty} t \, h(t) \, dt < +\infty$. It is claimed in the introduction of \cite{gaozhu21} that this assumption is not necessary. Under more restrictive assumptions, \cite{gaozhu21} contains precise deviations (see e.g. Theorem 2 therein).
\medskip

The non linear case is of course more difficult. According to the general seminal paper by Br\'emaud and Massouli\'e \cite{bre}, if $f$ is $L$-Lipschitz and $L \, \parallel h\parallel_{L^1(du)} \, < \, 1$, there exists a unique stationary version of the Hawkes process. Rate of convergence to equilibrium is studied in \cite{BNT} in two specific cases. As a consequence of Br\'emaud and Massouli\'e result, we get that 
\begin{equation}\label{eqergod}
\frac{N^h_t}{t} \, \to \, \mu = \mathbb E_s[N^h([0,1])]  \quad a.s. \textrm{ as } t \to +\infty \; ,
\end{equation} 
where $\mathbb E_s$ denotes the expectation w.r.t. the stationary ergodic distribution.

In the particular situation where $h$ is an exponential, the Hawkes process becomes Markovian and some results of large deviation have been obtained \cite{zhu15}. In \cite{zhu13}, Zhu proved a functional CLT \emph{at equilibrium} from which the following follows
\begin{thm}\label{thmzhu}
Assume that 
\begin{enumerate}
\item[(1)] \; $f$ is $L$-Lipschitz, 
\item[(2)] \; $h$ is non-negative, decreasing and such that $\int_0^{+\infty} t \, h(t) \, dt < +\infty$,
\item[(3)] \;  $L \, \int_0^{+\infty} \, h(t) \, dt \, < \, 1$,
\item[(4)] \; $\lambda \geq 0$.
\end{enumerate}
Then the stationary Hawkes process satisfies $\frac{N^h_t \, - \, \mu t}{\sqrt t} \, \Longrightarrow \, \mathcal N^h(0,\sigma^2)$ as $t \to +\infty$ in distribution, with $$\sigma^2 := \Var_s(N^h([0,1])) + 2 \sum_{j\geq 1} \, \Cov_s(N^h([0,1]),N^h([j,j+1]))$$ where $\Var_s$ and $\Cov_s$ denote the variance and covariance w.r.t. the stationary distribution.
\end{thm}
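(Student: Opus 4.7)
The plan is to reduce the result to a central limit theorem for a stationary ergodic sequence indexed by integers, and then apply Gordin's martingale approximation. Set $X_k = N^h([k,k+1]) - \mu$ for $k \in \Z$. Under the stationary distribution guaranteed by Br\'emaud--Massouli\'e (assumptions (1) and (3) suffice), $(X_k)_{k \in \Z}$ is a centred stationary ergodic sequence, and the Lipschitz condition together with $L\|h\|_{L^1}<1$ propagates to a uniform $L^2$ bound $\E_s[\Lambda(0)^2] < \infty$, hence $\E_s[X_0^2] < \infty$.

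The core input is a covariance-decay estimate. Using a Poisson-embedding coupling between the stationary Hawkes process and a version whose memory beyond lag $j$ has been truncated, one proves a bound of the form
\begin{equation*}
|\Cov_s(X_0,X_j)| \, \leq \, C \int_{j-1}^{+\infty} h(u)\,du,
\end{equation*}
where $C$ depends on $L$, $\lambda$, and $\|h\|_{L^1}$. The assumption $\int_0^{+\infty} t\, h(t)\, dt < +\infty$ is exactly what ensures $\sum_{j \geq 1} \int_{j-1}^{+\infty} h(u)\,du < +\infty$, so that the series defining $\sigma^2$ converges absolutely.

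With summable covariances, I would invoke Gordin's martingale approximation: one decomposes $X_k = D_k + Y_{k+1} - Y_k$, where $(D_k)$ is a stationary ergodic martingale-difference sequence (adapted to the natural filtration of $N^h$) with $\E_s[D_0^2] = \sigma^2$ and $(Y_k)$ is bounded in $L^2$. The martingale CLT applied to $\sum_{k=0}^{n-1} D_k$ yields $n^{-1/2} \sum_{k=0}^{n-1} X_k \Rightarrow \mathcal N(0,\sigma^2)$, and the telescoping remainder leaves the conclusion for $n^{-1/2}(N^h_n - \mu n)$ unchanged. Interpolation to real $t$ is standard: for $t \in [n,n+1)$, stationarity and the second moment bound from the first paragraph force $(N^h_t - N^h_n)/\sqrt t \to 0$ in probability, closing the gap.

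The hard step is clearly the quantitative covariance estimate. Controlling $\Cov_s(X_0,X_j)$ for large $j$ demands a coupling between two stationary versions of the Hawkes process that agree on events beyond lag $j$: the subcritical condition $L\,\|h\|_{L^1}<1$ provides the contractive mechanism, while the monotonicity of $h$ (assumption (2)) keeps the coupling ordered and tractable. The moment hypothesis $\int t\,h(t)\,dt<+\infty$ then promotes the resulting pointwise $h$-tail decay into the summability required by the stationary CLT, which is precisely where it enters the proof.
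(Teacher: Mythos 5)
First, note that the paper does not prove this theorem at all: it is quoted from Zhu \cite{zhu13}, and the text only records that Zhu's proof rests on martingale techniques for a functional CLT at equilibrium. Your martingale-approximation strategy is therefore in the same general family as the cited argument, but as written it has a genuine gap at its logical hinge. Absolute summability of the covariances $\Cov_s(X_0,X_j)$ guarantees that $\sigma^2$ is well defined, but it does \emph{not} license Gordin's martingale approximation: that theorem (and its relatives, Hannan, Maxwell--Woodroofe) requires decay of conditional expectations, typically $\sum_{j\geq 1}\|\E_s[X_j\mid\mathcal F_0]\|_{L^2}<\infty$ or $\sup_n\|\E_s[X_1+\dots+X_n\mid\mathcal F_0]\|_{L^2}<\infty$, and there are classical examples of stationary ergodic, square-integrable sequences with summable covariances for which the CLT fails. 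So the sentence ``with summable covariances, I would invoke Gordin'' is precisely the step that does not follow. The truncation coupling you sketch is in fact the natural tool to bound $\|\E_s[X_j\mid\mathcal F_0]\|_{L^2}$ rather than merely $|\Cov_s(X_0,X_j)|$, which would repair the logic, but you never state or establish such a bound.

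Second, the one quantitative input you do state, $|\Cov_s(X_0,X_j)|\leq C\int_{j-1}^{+\infty}h(u)\,du$, is asserted rather than proved, and you yourself identify it as the hard step. Essentially all of the content of the theorem lives there: constructing the coupling of the stationary process with a memory-truncated version, using $L\|h\|_{L^1}<1$ to make the influence of the remote past contract, verifying $\E_s[\Lambda(0)^2]<\infty$ for the stationary nonlinear process, and showing how monotonicity of $h$ and $\int_0^{+\infty} t\,h(t)\,dt<\infty$ turn the tail of $h$ into the summable conditional-expectation decay needed above. Without these elements the proposal is a plausible roadmap, close in spirit to Zhu's martingale-based proof, but it is not a proof.
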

The proof is based on martingales techniques for the functional CLT. As the author himself is saying, to obtain an explicit expression for $\mu$ and $\sigma^2$ can rapidly become a difficult task. In the same work, Zhu also obtained a Strassen iterated logarithm law. One can also mention \cite{zhu14} where a large deviation result is obtained by contracting the level-3 LDP, i.e. by considering the shifted occupation measure. Theorem 2 in \cite{zhu14} then furnishes a LDP for $N^h_t/t$, provided $h$ is non decreasing and non-negative and $f$ is sub-linear at infinity. The expression of the rate function, as the infimum of the entropy on some set of measures satisfying a linear constraint is however not really tractable.
\medskip

Since we are interested in neurosciences, our goal in this work is to understand the role of self-inhibition in the asymptotic behaviour of Hawkes processes. Since inhibition will slow down the neuronal activity, we thus have to consider signed functions $h$ (the positive part modeling the self-excitation), but also jump rate functions $f$ satisfying $f(u)=0$ if $u\leq 0$. In the present paper, we will study the case of a general, signed, reproduction function with compact support and the specific jump rate function $f(u)=u^+= \max (u,0)$. This choice is of course the simplest one allowing us to introduce inhibition, and to compare this situation with linear models.
\medskip

We will obtain a LLN, a CLT and deviation inequalities, where the parameters are characterized by the renewal structure of the process introduced in \cite{costa} replacing the classical cluster representation of the self-exciting case established in \cite{hawkes74} which is no more valid. This renewal structure allows us to write the Hawkes process almost as a cumulative process.%

The main tools are then limit theorems for cumulative processes and actually, the technical work consists in showing that one can apply these theorems in the present situation. An important tool is a comparison between the considered Hawkes process, the self excited process associated to the positive part of the reproduction function, furnishing an upper bound, and a purely inhibited process corresponding to the (negative) lower bound of the reproduction function (see Proposition \ref{prop_minoration}), furnishing a lower bound.  

For simplicity we restrict ourselves to an empty initial condition (see below). Some explicit computations are done in simple particular cases of pure inhibition ($h$ non-positive). 
Precise statements will require some definitions, so that they are postponed to the next section. We emphasize, that the inhibition part introduces new intricacies. 

As we said, very few papers are dealing with inhibition. In \cite{evaesaim} some specific kernels are considered, but the addressed problem is not the one we are considering here. Looking at possibly negative reproduction functions is not only of mathematical interest. As shown in \cite{RB,RBfit,RBreconst} a multivalued version of the model we are studying is particularly well suited for modeling spike train of neurons, at least in an almost stationary regime. To extend our results to the multivalued framework should thus be an interesting question.
\medskip

\section{Notation, definitions and results.}\label{secresults}

\subsection{Hawkes processes}

We consider an appropriate filtered probability space $(\Omega, \mathcal{F}, (\mathcal{F}_t)_{t \geq 0}, \Proba)$ satisfying the usual assumptions. 
\begin{definition}
\label{def:Hawkes}
	Let $\lambda >0$ and $h: (0, +\infty) \rightarrow \R$ a signed measurable function. Let $N^0$ a locally finite point process on $(- \infty, 0]$ with law $\mlaw$. \\
	The point process $N^h$ on $\R$ is a Hawkes process on $(0, +\infty)$, with initial condition $N^0$ and reproduction measure $\mu(dt) = h(t) dt$ if: 
	\begin{itemize}
		\item $N^h\mid_{(-\infty, 0]} = N^0$,
		\item the conditional intensity measure of $N^h\mid_{(0, +\infty)}$ with respect to $(\mathcal{F}_t)_{t \geq 0}$ is absolutely continuous w.r.t the Lebesgue measure and has density:
		\begin{equation} \label{eq_intensite}
		\Lambda^h : t \in (0, +\infty) \mapsto \left( \lambda + \int_{(-\infty, t)} h(t-u) N^h(du) \right)^+ .
		\end{equation}
	\end{itemize}
where $x^+ = \max(x,0)$.
\end{definition}

The next proposition gives an explicit representation of the Hawkes process as solution of an SDE driven by a Poisson point process and states an important coupling property.
\begin{prop}[Proposition 2.1 in \cite{costa}]\label{prop_EDS}
	Let $Q$ be a $(\mathcal{F}_t)_{\tpos}$ - two-dimensional Poisson point process on $(0, + \infty) \times (0, +\infty)$ with unit intensity. We consider the equation
	\begin{align}\label{eq_EDS}
	\left\lbrace
	\begin{array}{l}
	N^h = N^0 + \int_{(0, +\infty) \times (0, +\infty)} \delta_u \1_{\theta \leq \Lambda^h(u)} Q(du, d\theta) \\
	\Lambda^h(u) = \left( \lambda + \int_{(-\infty, u)} h(u-s) N^h(ds) \right) ^+, ~ u>0, 	
	\end{array}
	\right.
	\end{align}
	where $\lambda >0$ is an immigration rate, $h:(0, +\infty)\rightarrow \R$ is a signed measurable function and $N^0$ is an initial condition of law $\mlaw$ on $(-\infty, 0]$.\\
	We consider the similar equation for $N^{h^+}$ in which $h$ is replaced by $h^+(.)=max(h(.),0)$. We assume that $\|h^+\|_1:=\parallel h^+\parallel_{L^1(du)} <1$ and that the distribution $\mlaw$ satisfies: 
	\begin{align}
	\forall t>0, \int_0^t \E_{\mlaw}\left( \int_{(-\infty, 0]} h^+(u-s)N^0(ds) \right) du < + \infty.
	\end{align}
	Then: 
	\begin{itemize}
		\item There exists a pathwise strong solution $N^h$ of equation \eqref{eq_EDS}, and this solution is a Hawkes process. 
		\item This property is true for $N^{h^+}$. Moreover, in the sense of measures, $N^h \leq N^{h^+}$, meaning that for all $0 \leq s \leq t <+\infty$, $N^h([s,t]) \leq N^{h^+}([s,t])$. 
	\end{itemize}
\end{prop}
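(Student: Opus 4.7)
My plan is to build both processes simultaneously as pathwise functionals of the same Poisson point process $Q$, and to obtain the domination $N^h \leq N^{h^+}$ as a by-product of the construction. The existence of $N^{h^+}$ is standard (since $h^+ \geq 0$ and $\|h^+\|_1 < 1$ put us in the sub-critical linear-majorization regime of Brémaud--Massouliè), so the real work is to construct $N^h$ as a thinning of $N^{h^+}$, using a pointwise comparison of intensities. The integrability assumption on $\mlaw$ is exactly what is needed to propagate a bound on $\E[\Lambda^{h^+}(t)]$ through Gronwall-type iteration.

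\textbf{Step 1: existence for $N^{h^+}$.} First I would set up the Picard iteration $N^{h^+}_{(n+1)} = N^0 + \int \delta_u \1_{\theta \leq \Lambda^{h^+}_{(n)}(u)}\, Q(du,d\theta)$ with $\Lambda^{h^+}_{(n)}(u) = \lambda + \int_{(-\infty,u)} h^+(u-s) N^{h^+}_{(n)}(ds)$ (no positive part is needed since every term is nonnegative). Taking expectations and using Fubini plus the integrability hypothesis on $\mlaw$, one gets $m_{n+1}(t) \leq \lambda + \int_0^t h^+(t-u)\, m_n(u)\, du + C(t)$, where $m_n(t) = \E[\Lambda^{h^+}_{(n)}(t)]$. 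Iterating this convolution inequality and using $\|h^+\|_1 < 1$ yields a locally bounded limit, and monotone convergence (the sequence is increasing because each iteration adds atoms) produces the pathwise strong solution $N^{h^+}$.

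\textbf{Step 2: construction of $N^h$ and the coupling.} This is the main obstacle, because $h$ is signed so the iteration is not monotone. My plan is to construct $N^h$ as a thinning of $N^{h^+}$ atom by atom. Given any candidate $N^h$ with $N^h \leq N^{h^+}$ on $(-\infty, u)$, one has, using $h = h^+ - h^-$ with $h^- \geq 0$,
\begin{equation*}
\int_{(-\infty,u)} h(u-s)\, N^h(ds) \; \leq \; \int_{(-\infty,u)} h^+(u-s)\, N^h(ds) \; \leq \; \int_{(-\infty,u)} h^+(u-s)\, N^{h^+}(ds),
\end{equation*}
so $\Lambda^h(u) \leq \Lambda^{h^+}(u)$ pointwise. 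Consequently $\{\theta \leq \Lambda^h(u)\} \subset \{\theta \leq \Lambda^{h^+}(u)\}$ and every atom the equation forces on $N^h$ is already an atom of $N^{h^+}$. I would exploit this by enumerating the jump times $T_1 < T_2 < \cdots$ of $N^{h^+}$ (locally finite by Step~1) and defining $N^h$ inductively on $[0, T_k]$: the process $\Lambda^h$ is deterministic between consecutive $T_j$ (it depends only on past atoms), and at each $T_k$ one accepts the atom iff $\theta_k \leq \Lambda^h(T_k)$. By construction $N^h \leq N^{h^+}$, and by the displayed bound the induction step is consistent. The resulting process satisfies \eqref{eq_EDS} pathwise.

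\textbf{Step 3: uniqueness.} For pathwise uniqueness, let $N^h$ and $\tilde N^h$ be two solutions driven by the same $Q$ with the same initial condition, and set $D_t = \E[\,\int_0^t |\1_{\theta \leq \Lambda^h(u)} - \1_{\theta \leq \Lambda^{\tilde h}(u)}|\, Q(du,d\theta)\,]$. Using $|x^+ - y^+| \leq |x - y|$ on the intensities and Fubini, one obtains a convolution inequality $D_t \leq \int_0^t |h|(t-s)\, D_s\, ds$ which, since $\|h\|_1 \leq \|h^+\|_1 + \|h^-\|_1$ may however exceed $1$, I would instead localize and iterate on small intervals, or directly invoke a Gronwall lemma for convolution inequalities with locally integrable kernel (this works because $D_t$ is locally bounded thanks to the domination of Step~2). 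Uniqueness then follows, completing the proof.
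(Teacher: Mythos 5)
Note first that this paper offers no proof of this statement: it is imported verbatim as Proposition 2.1 of \cite{costa}, so the only comparison available is with that reference. Your argument — construct the dominating linear process $N^{h^+}$ by monotone Picard iteration under $\|h^+\|_1<1$ and the integrability of the initial condition, then build $N^h$ by thinning inductively along the (locally finite) atoms of $N^{h^+}$ using the pointwise bound $\Lambda^h\leq\Lambda^{h^+}$, which gives both existence and the domination $N^h\le N^{h^+}$ — is essentially the coupling proof used there and is sound; the only remark is that pathwise uniqueness already follows from the same induction (every solution's atoms lie among those of $N^{h^+}$ and acceptance at each atom is determined by the past), so the localized convolution--Gronwall step, while correct, is not really needed.
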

\medskip

\subsection{Definitions and assumptions}
In this paper we consider a Hawkes process $N^h$ according to Definition \ref{def:Hawkes}. We focus on the case of a signed reproduction function $h$ which represents a possible inhibition on the appearance of future points. 
\begin{hyp}
\label{assumptions}

In all the paper, we will make the following assumptions : 
\begin{itemize}
		\item[$i)$]  $h: (0, +\infty) \rightarrow \R$ is a compactly supported signed measurable function. We define $L(h)$ as the supremum of the support of $h$: $L(h):= \sup\{ t>0, |h(t)|>0 \} <\infty$. 
		\item[$ii)$] $$\|h^+\|_1 := \int_0^{+\infty} \, h^+(u) \, du<1,$$
where $h^+(x)=max(h(x),0)$.
		\item[$iii)$] $\lambda >0$,
		\item[$iv)$] the initial condition on $]-\infty,0[$ does not contain any point i.e. $\mlaw=\delta_{\emptyset}$. 
	\end{itemize}
\end{hyp}

We are  interested in the asymptotic behaviour of the number of jumps of the process $N^h$ on the interval $[0,t]$, and we denote:
 $$N^h_t=N^h([0,t]), \quad  \forall t \geq 0$$
In particular we aim at quantifying precisely the loss of points due to inhibition. We will prove asymptotic results for $\frac{N^h_t}{t}$  and give  exact computations on specific examples.
\medskip

First we show another comparison result, this time furnishing a lower bound for $N^h_t$. This result motivates the detailed study of the canceling of intensity example.

\begin{prop}[Minoration of Hawkes process]\label{prop_minoration}~\\
Let $h$ be a function satisfying Assumptions \ref{assumptions}. Let $\lambda >0$ and define $g = - \lambda \1_{[0,L(h)]}$. \\
One can find a coupling of two Hawkes processes $N^h$ and $N^g$, respectively associated with the reproduction functions $h$ and $g$ and with basal intensity $\lambda$, such that for any $t\ge0$:
	$$N^h_t \geq N^g_t \quad ~ a.s. $$
\end{prop}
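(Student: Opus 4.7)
The plan is to realise both processes as strong solutions of the SDE \eqref{eq_EDS} driven by a \emph{common} Poisson point measure $Q$, and then to construct a pathwise injection from the jumps of $N^g$ into those of $N^h$. A direct computation with $g = -\lambda \1_{[0,L(h)]}$ shows that $\Lambda^g(u) = \lambda$ when $N^g$ has no point in $[u-L(h),u)$, and $\Lambda^g(u)=0$ otherwise. Consequently $N^g$ is a renewal process whose inter-arrival times are distributed as $L(h) + E$ with $E \sim \mathrm{Exp}(\lambda)$, and every jump of $N^g$ at time $u$ corresponds to an atom $(u,\theta) \in Q$ with $\theta \le \lambda$.

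The heart of the proof is the following pathwise claim: if $N^g$ jumps at $u$ and $N^h$ has no point in $[u-L(h),u)$, then $N^h$ also jumps at $u$. Indeed, by Assumption~\ref{assumptions}$(iv)$ the process $N^h$ has no point in $(-\infty,0]$, and $h$ vanishes outside $[0,L(h)]$; so under the hypothesis every point $s$ of $N^h$ with $s<u$ satisfies $h(u-s)=0$, and hence $\Lambda^h(u)=\lambda$. The Poisson atom $(u,\theta)$ with $\theta\le\lambda$ that triggered the jump of $N^g$ therefore lies below $\Lambda^h(u)$ as well and triggers a jump of $N^h$. In contrapositive form: any jump $u$ of $N^g$ forces $N^h$ to have at least one jump in $[u-L(h),u]\cap[0,+\infty)$.

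To conclude, let $u_1 < \cdots < u_k$ be the jumps of $N^g$ in $[0,t]$. Since the inter-arrival times of $N^g$ exceed $L(h)$ almost surely, the intervals $[u_i - L(h), u_i]$ have pairwise disjoint interiors. Each contains a jump of $N^h$ lying in $[0,t]$ (by the pathwise claim, using that $N^h$ has no point before $0$), and picking one such $N^h$-jump per interval defines an injection into the set of jumps of $N^h$ in $[0,t]$, whence $N^h_t \ge N^g_t$ almost surely.

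The only step that is not pure book-keeping is the pathwise claim above; it is precisely where the compact support of $h$ and the empty initial condition intervene, and it explains structurally why the pure-inhibition process $N^g$ furnishes a natural lower bound.
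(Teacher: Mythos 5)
Your proof is correct, and it rests on the same coupling and the same crucial observation as the paper: drive both processes by the same Poisson measure $Q$, note that $\Lambda^g\le\lambda$ always, and that $\Lambda^h(u)=\lambda$ whenever $N^h$ has been silent on $[u-L(h),u)$ (compact support of $h$ plus the empty initial condition), so that any atom $(u,\theta)$ with $\theta\le\Lambda^g(u)$ also satisfies $\theta\le\Lambda^h(u)$. Where you genuinely differ is the bookkeeping. The paper runs an induction along the jump times $U^g_j$, proving $N^h_{U^g_j}\ge j$ with a case analysis on whether $U^g_j$ is a jump of $N^h$ and whether $N^h$ has extra jumps in the window $[U^g_j,U^g_{j+1})$; you instead extract from the same intensity comparison the single pathwise statement that every jump $u$ of $N^g$ forces a jump of $N^h$ in $[u-L(h),u]\cap[0,\infty)$, and then count by an injection using the fact that consecutive $N^g$-jumps are separated by more than $L(h)$, so the look-back intervals are pairwise disjoint. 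This buys a shorter, induction-free argument in which the role of the renewal structure of $N^g$ is explicit; the paper's induction, on the other hand, keeps track of the running counts and is the form most directly reusable elsewhere in the article. One small wording point: since $\Lambda^g$ vanishes on $(u_i,u_i+L(h)]$, the inter-arrival times of $N^g$ are almost surely \emph{strictly} larger than $L(h)$, so the intervals $[u_i-L(h),u_i]$ are in fact pairwise disjoint (not merely with disjoint interiors), which is exactly what your injection needs; as stated, a shared endpoint could in principle make two intervals contribute the same $N^h$-jump, so it is worth noting this strict separation explicitly.
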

Note that this comparison result is weaker than the majoration via $h^+$, since we do not have $N^h([s,t])\geq N^g([s,t])$ for all $s$, but only for $s=0$.
\begin{proof}
The main idea is to construct these two processes with the same Poisson point process $Q$ on $(0, +\infty)^2$. We consider the successive jumps of $N^h$: $U^h_1, U^h_2, U^h_3, ...$; and the ones of $N^g$: $U^g_1, U^g_2, U^g_3, ...$. \\

We will prove by induction, that 
$$\forall j \geq 1, N^h_{U^g_j} \geq N^g_{U^g_j} = j \quad \text{a.s.}$$
 by studying the intervals associated with the  $[U^g_j, U^g_{j+1})$ for $j \in \N$.
 We stress out that considering the definition of the function $g$, the intensity $\Lambda^g$ of the Hawkes process $N^g$ can only take the two values $0$ and $\lambda$.

\noindent\underline{First interval:} First remark that $\forall t < \min(U^h_1, U^g_1)$,$$
\Lambda^h(t) = \lambda = \Lambda^g(t),$$
thus we have $U^h_1 = U^g_1$ and consequently $N^h_{U^g_1} = N^g_{U^g_1}$.

\noindent\underline{Second interval:} For $j=2$: by definition, there is only one jump for $N^g$ on $[U^g_1, U^g_2)$. There are two possibilities for $N^h$: 
		\begin{itemize}
			\item Assume that there is no other jump that $U^h_1$ in this interval.  Since $U^g_2 \geq U^g_1 + L(h)$, we have $\Lambda^g(U^g_2-) = \lambda = \Lambda^h(U^g_2-)$. Accordingly, $U^h_2 = U^g_2$ and in particular, $N^h_{U^g_2} = N^g_{U^g_2}$ a.s.
			\item Otherwise, there is at least one other jump of $N^h$ in $(U^g_1, U^g_1+L(h))$. In this case, $N^h_{U^g_2} \geq 2 = N^g_{U^g_2}$ a.s.
		\end{itemize}

\noindent\underline{Recursion step:} We fix $j$ and we suppose that the statement holds for $i \leq j$. Let $k = N^h_{U^g_j}\ge j$ by assumption. Then consider the two following cases:
\begin{itemize}
\item If $U^g_j$ is a jump of $N^h$, there is either at least one other jump of $N^h$ in $(U^g_j, U^g_{j+1})$ or no other jump. If there is no other jump, then  $\Lambda^h(U^g_{j+1}-) = \Lambda^g(U^g_{j+1}-) = \lambda$, since $U^g_{j+1} > U^g_j + L(h)$. So, $U^h_{k+1}=U^g_{j+1}$. In both situations, 
$$N^h_{U^g_{j+1}} \geq 1 + N^h_{U^g_{j}} \geq 1+ N^g_{U^g_j} = N^g_{U^g_{j+1}}.$$
\item If $U^g_j$ is not a jump of $N^h$, then $$\Lambda^h(U^g_j-) < \lambda = \Lambda^g(U^g_j-).$$
Therefore since the support of $h$ is of length $L(h)$ we deduce that 
$$U^h_k < U^g_j < U^h_k + L(h).$$
By the induction hypothesis, we know that $k \geq j$. Then, there is either at least one jump of $N^h$ in $(U^g_j, U^g_j+L(h))$, or the next jump is $U^g_{j+1}$, i.e. $U^h_{k+1} = U^g_{j+1}$. In both cases, we have $N^h_{U^g_{j+1}} \geq 1 + k \geq 1+ j = N^g_{U^g_{j+1}}$.
		\end{itemize}
		
This concludes the induction. Let us come back to a general $t \in \R^+$. For any fixed $\omega$, there exists $j=j(\omega) \in \N$, such that: $U^g_j(\omega) \leq t < U^g_{j+1}(\omega)$. Then using the monotonicity of $N^h$ we have
$$N^h_t \geq N^h_{U^g_j} \geq N^g_{U^g_j}  = N^g_t .$$
\end{proof}
Both comparison results may be used in the sequel.
\medskip

\subsection{Hawkes processes as cumulative processes.}
Our study of the large time behaviour of Hawkes processes lies on a renewal structure for Hawkes processes first introduced in \cite{costa} we shall partly recall below. Notice that this structure is used in \cite{costa} for a completely different purpose.

\begin{center}
	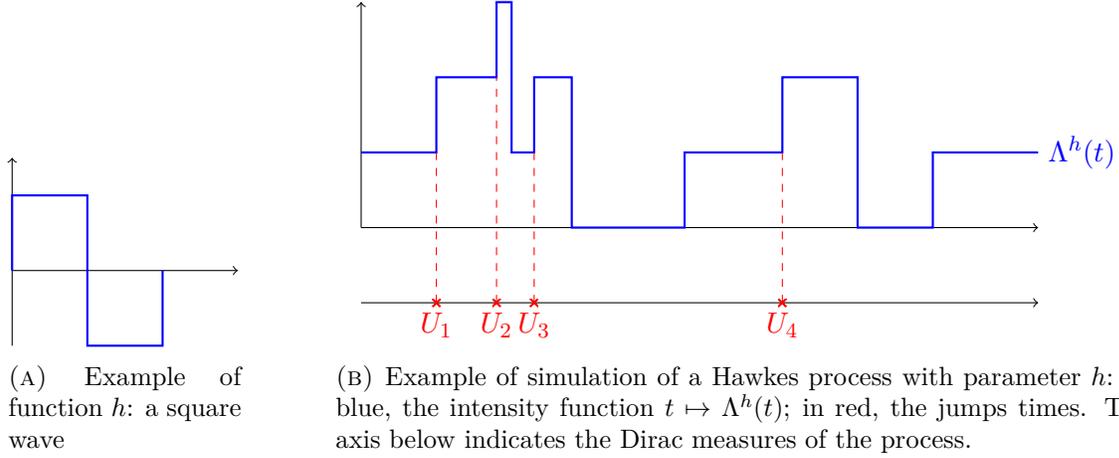
\begin{figure}
		\begin{subfigure}[b]{0.2 \textwidth}
			\centering
			\begin{tikzpicture}
			\draw [->, black] (0, 0) -- ++(3,0);
			\draw [->, black] (0,-1) -- ++(0,2.5);
			
			\draw [thick,blue](0,0) -- ++(0,1) -- ++(1,0) -- ++(0,-2) -- ++(1,0) -- ++(0,1); 
			\end{tikzpicture}
			\caption{Example of function $h$: a square wave}
		\end{subfigure}
		\hspace{1cm}
		\begin{subfigure}[b]{0.7 \textwidth}
			\centering
			\begin{tikzpicture}[scale = 1]
			\draw [->, black] (0, 0) -- (9,0);
			\draw [->, black] (0,0) -- (0,3);
			\draw [->, black] (0, -1) -- (9,-1);
			
			\coordinate (saut1) at (1,0);
			\coordinate (saut2) at (1.8,0);
			\coordinate (saut3) at (2.3,0);
			\coordinate (saut4) at (5.6,0);
			
			\coordinate (mesure1) at ($(saut1) + (0,-1)$);
			\coordinate (mesure2) at ($(saut2) + (0,-1)$);
			\coordinate (mesure3) at ($(saut3) + (0,-1)$);
			\coordinate (mesure4) at ($(saut4) + (0,-1)$);
			
			\coordinate (B1) at ($(saut1) + (0,1)$); 
			\coordinate (B2) at ($(B1) + (0,1)$);
			\coordinate (C1) at ($(saut2) + (0,2)$); 
			\coordinate (C2) at ($(C1) + (0,1)$);
			\coordinate (D1) at ($(saut1) + (1,3)$); 
			\coordinate (D2) at ($(D1) + (0,-2)$);
			\coordinate (E1) at ($(saut3) + (0,1)$); 
			\coordinate (E2) at ($(E1) + (0,1)$);
			\coordinate (F1) at ($(saut2) + (1,2)$); 
			\coordinate (F2) at ($(F1) + (0,-2)$);
			\coordinate (G1) at ($(saut3) + (2,0)$); 
			\coordinate (G2) at ($(G1) + (0,1)$);
			\coordinate (H1) at ($(saut4) + (0,1)$); 
			\coordinate (H2) at ($(H1) + (0,1)$);
			\coordinate (I1) at ($(saut4) + (1,2)$); 
			\coordinate (I2) at ($(I1) + (0,-2)$);
			\coordinate (J1) at ($(saut4) + (2,0)$); 
			\coordinate (J2) at ($(J1) + (0,1)$);
			
			\coordinate (B3) at ($(B1) + (1,1)$); 
			\coordinate (B4) at ($(B1) + (1,0)$);
			\coordinate (B5) at ($(B1) + (2,0)$);
			\coordinate (B6) at ($(B1) + (2,1)$);
			
			\coordinate (C3) at ($(C1) + (1,1)$); 
			\coordinate (C4) at ($(C1) + (1,0)$);
			\coordinate (C5) at ($(C1) + (2,0)$);
			\coordinate (C6) at ($(C1) + (2,1)$);
			
			\coordinate (E3) at ($(E1) + (1,1)$); 
			\coordinate (E4) at ($(E1) + (1,0)$);
			\coordinate (E5) at ($(E1) + (2,0)$);
			\coordinate (E6) at ($(E1) + (2,1)$);

			\draw [thick, blue] (0,1) -- (B1) -- (B2) -- (C1) -- (C2) -- (D1) -- (D2) -- (E1) -- (E2) -- (F1) -- (F2) -- (G1) -- (G2) -- (H1) -- (H2) -- (I1) --(I2) -- (J1) -- (J2) -- (9,1);
			
			\draw [blue] (9,1) node [right] {$\Lambda^h(t)$};
			
			\draw [dashed, red] (mesure1) -- (B1)
			(mesure2) -- (C1)
			(mesure3) -- (E1)
			(mesure4) -- (H1);
			
			\draw [thin, red] plot[mark=x] (mesure1)
			plot[mark=x] (mesure2)
			plot[mark=x] (mesure3)
			plot[mark=x] (mesure4);
			
			\draw [red] (mesure1) node [below] {$U_1$}
			(mesure2) node [below] {$U_2$}
			(mesure3) node [below] {$U_3$}
			(mesure4) node [below] {$U_4$};
			
			\end{tikzpicture}
			\caption{Example of simulation of a Hawkes process with parameter $h$: in blue, the intensity function $t \mapsto \Lambda^h(t)$; in red, the jumps times. The axis below indicates the Dirac measures of the process.}
		\end{subfigure}
		\caption{Example of Hawkes process} \label{fig_ex_processus_Hawkes}
	\end{figure}
\end{center}

Let $N^h$ be a Hawkes process according to Definition \ref{def:Hawkes}, with initial condition $N^0 = \emptyset$. We denote by $U_1, U_2, U_3, ...$ its successive jumps.

Let us introduce the renewal times of the process which splits the time line into independent and identically distributed time windows of length $\tau_1,\tau_2,\cdots$. 

Define the stopping time
$$\tau_1 = \inf \{t> U^1, N^h((t-L(h),t]) = 0\},$$
that is the first time after $U^1$ such that there has been no jump during a time $L(h)$. We also set $$S_0 = 0 \quad \text{and}\quad S_1 = \tau_1.$$
Let us now define  $$W_1 = N^h([U^1, S_1])= N^h([0,S_1]),$$ the number of jumps of the process in this first time window and rename the jump times in the first time window as:
$$U_j^1=U_j,\quad \forall j\in\{1,\cdots, W_1\}.$$ 

We shall see below that $\tau_1$ and $W_1$ are almost surely finite. Recursively let $i \in \N^*$ such that $(\tau_1,W_1), ... (\tau_i,W_i)$ are well defined (and a.s. finite). Let $S_i=\sum_{k=1}^i \tau_k$ and define
$$U^{i+1}_1 = U_{W_1+ ... + W_i + 1},$$
and 
\begin{equation}
\label{def:taui}\tau_{i+1} = \inf \{t> U^{i+1}_1, N^h((t-L(h),t]) = 0\}-S_i,
\end{equation}
Notice that there is at least one jump in $[S_i, S_i + \tau_{i+1}]$.
We finally introduce the number of jumps in the $(i+1)$'th window as 
\begin{equation}\label{def:Wi}
W_{i+1} = N^h([U^{i+1}_1, S_i + \tau_{i+1}) = N^h([S_i, S_i + \tau_{i+1}]),\end{equation}
and rename the associated jump times as:
$$U^{i+1}_j = U_{W_1+ ... + W_i + j}, \quad \forall j \in \{1, ..., W_{i+1}\}. $$

Figure \ref{fig_ex_creneau_neg_retard} is an example of this splitting of the time and the renumbering of the jumps, in the case where $h(t) = - \lambda \1_{(1, 2)}(t)$, so that $L(h) = 2$.

\begin{figure}
	\centering
	\begin{tikzpicture}[scale=1.5]
	\draw [->, black] (0, 0) -- (9,0);
	\draw [->, black] (0,0) -- (0,1.5);
	\draw [->, black] (0, -1) -- (9,-1);
	
	\coordinate (saut1) at (0.5,0); 
	\coordinate (saut2) at (0.7,0);
	\coordinate (saut3) at (0.8,0);
	\coordinate (saut4) at (1.2,0);
	
	\coordinate (saut5) at (3.5,0); 
	
	\coordinate (saut6) at (5.9,0); 
	\coordinate (saut7) at (6.2,0);
	
	\coordinate (s1_debut_haut) at ($(saut1) + (1,1)$);
	\coordinate (s5_debut_haut) at ($(saut5) + (1,1)$);
	\coordinate (s6_debut_haut) at ($(saut6) + (1,1)$);
	
	\coordinate (s1_debut_bas) at ($(saut1) + (1,0)$);
	\coordinate (s5_debut_bas) at ($(saut5) + (1,0)$);
	\coordinate (s6_debut_bas) at ($(saut6) + (1,0)$);
	
	\coordinate (s1_fin_bas) at ($(saut4) + (2,0)$);
	\coordinate (s5_fin_bas) at ($(saut5) + (2,0)$);
	\coordinate (s6_fin_bas) at ($(saut7) + (2,0)$);
	
	\coordinate (s1_fin_haut) at ($(s1_fin_bas) + (0,1)$);
	\coordinate (s5_fin_haut) at ($(s5_fin_bas) + (0,1)$);
	\coordinate (s6_fin_haut) at ($(s6_fin_bas) + (0,1)$);

	\coordinate (mesure1) at ($(saut1) + (0,-1)$);
	\coordinate (mesure2) at ($(saut2) + (0,-1)$);
	\coordinate (mesure3) at ($(saut3) + (0,-1)$);
	\coordinate (mesure4) at ($(saut4) + (0,-1)$);
	\coordinate (mesure5) at ($(saut5) + (0,-1)$);
	\coordinate (mesure6) at ($(saut6) + (0,-1)$);
	\coordinate (mesure7) at ($(saut7) + (0,-1)$);
	
	\draw [thick, blue] (0,1) -- (s1_debut_haut) -- (s1_debut_bas) -- (s1_fin_bas) -- (s1_fin_haut) -- (s5_debut_haut) -- (s5_debut_bas) -- (s5_fin_bas) -- (s5_fin_haut) -- (s6_debut_haut) -- (s6_debut_bas) -- (s6_fin_bas) -- (s6_fin_haut) -- (9,1);
	
	\draw [blue] (0,1) node [above right] {$\Lambda^h(t)$};
	
	\draw [thin, red] plot[mark=x] (mesure1)
	plot[mark=x] (mesure2)
	plot[mark=x] (mesure3)
	plot[mark=x] (mesure4)
	plot[mark=x] (mesure5)
	plot[mark=x] (mesure6)
	plot[mark=x] (mesure7);
	
	\draw [dashed, red] (mesure1) -- ++(0,2)
	(mesure2) -- ++(0,2)
	(mesure3) -- ++(0,2)
	(mesure4) -- ++(0,2)
	(mesure5) -- ++(0,2)
	(mesure6) -- ++(0,2)
	(mesure7) -- ++(0,2);
	
	\draw [red] (mesure1) node [below] {$U^1_1$}
	(mesure3) node [below = 10 pt] {$...$}
	(mesure4) node [below] {$U^1_4$}
	(mesure5) node [below] {$U^2_1$}
	(mesure6) node [below] {$U^3_1$}
	(mesure7) node [below right] {$U^3_2$};
	
	\draw [red, decoration={brace,amplitude=5 pt}, decorate] ($(mesure4)+(0.2,-0.4)$) -- ($(mesure1)+(-0.2,-0.4)$) coordinate[midway] (mid);
	\draw [red] (0.8, -1.7) node {$W_1 =4$};
	
	\draw [red, decoration={brace,amplitude=5 pt}, decorate] ($(mesure5)+(0.2,-0.4)$) -- ($(mesure5)+(-0.2,-0.4)$) coordinate[midway] (mid);
	\draw [red] ($(mesure5)+(0, -0.7)$) node {$W_2 =1$};
	
	\draw [red, decoration={brace,amplitude=5 pt}, decorate] ($(mesure7)+(0.4,-0.4)$) -- ($(mesure6)+(-0.2,-0.4)$) coordinate[midway] (mid);
	\draw [red] (6.1, -1.7) node {$W_3 =2$};
	
	\draw [<->, darkgray]  (1.6, 1) -- (3.1, 1);
	\draw [darkgray] (2.3, 1) node [above] {$1+U^1_4- U^1_1$};
	
	\draw [<->, darkgray]  (4.6, 1) -- (5.4, 1);
	\draw [darkgray] (5, 1) node [above] {$1$};
	
	\draw [<->, darkgray]  (7, 1) -- (8.1, 1);
	\draw [darkgray] (7.5, 1) node [above] {$1+U^3_2-U^1_1$};
	
	\draw plot[mark=+] (0, -1);
	\draw (0, -1) node [below] {$0$};
	
	\draw plot[mark=+] (3.2, -1);
	\draw ($(s1_fin_bas) + (0, -1)$) node [below] {$S_1$};
	
	\draw plot[mark=+] (5.5, -1);
	\draw ($(s5_fin_bas) + (0, -1)$) node [below] {$S_2$};
	
	\draw plot[mark=+] (8.2, -1);
	\draw ($(s6_fin_bas) + (0, -1)$) node [below] {$S_3$};
	
	\draw [dashed, black] (0, -1) -- (0,0)
	 ($(s1_fin_bas)$) -- ++ (0, -1)
	 ($(s5_fin_bas)$) -- ++ (0, -1)
	 ($(s6_fin_bas)$) -- ++ (0, -1) ;
	 
	\draw [<->, violet, semithick] (0, -0.5) -- ($(s1_fin_bas) + (-0.01, -0.5)$);
	\draw [<->, violet, semithick] ($(s1_fin_bas) + (0.01, -0.5)$) -- ($(s5_fin_bas) + (-0.01, -0.5)$);
	\draw [<->, violet, semithick] ($(s5_fin_bas) + (0.01, -0.5)$) -- ($(s6_fin_bas) + (-0.01, -0.5)$);
	
	\draw [violet] (2.2, -0.5) node [below] {$\tau_1$};
	\draw [violet] (4.5, -0.5) node [below] {$\tau_2$};
	\draw [violet] (6.7, -0.5) node [below] {$\tau_3$};
	\end{tikzpicture}
	\caption{Example of the evolution of intensity in function of time and renumbering of jumps in the case where $h=-\lambda \, \1_{[1,2]}$.} \label{fig_ex_creneau_neg_retard}
\end{figure}
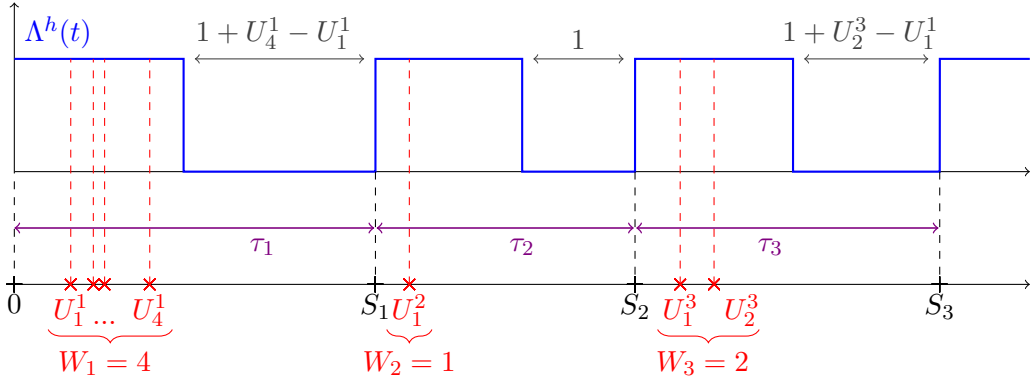

The next Proposition gathers important properties on the law of $(\tau_i,W_i)$ defined above. However more explicit information are difficult to obtain except in specific cases (see Section \ref{secexample}).

\begin{prop}\label{lem_tauW_iid_loiexp} Under Assumptions \ref{assumptions}, and using the above definitions:
\begin{itemize}
\item[$i)$] the $(\tau_i, W_i)_i$ are i.i.d. random variables,
\item[$ii)$] for $i \in \N^*$, the $(U^i_1 - S_{i-1})$ are i.i.d. random variables with exponential distribution $\Exp(\lambda)$, that is, the time between the beginning of a window and the first point of this window follows an exponential law.
\end{itemize}
\end{prop}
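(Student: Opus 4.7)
The plan is to prove that each $S_i$ is an almost-surely finite renewal time at which the dynamics of $N^h$ restart afresh, via a strong Markov argument applied to the driving Poisson measure $Q$. Finiteness of $S_i$ is the easier ingredient: by the comparison $N^h \leq N^{h^+}$ of Proposition \ref{prop_EDS} together with $\|h^+\|_1<1$, the process $N^h$ is dominated by a subcritical linear Hawkes process, hence is non-explosive and has an intensity with bounded mean. After any jump, the probability of observing no further jump during the next interval of length $L(h)$ is then bounded below by a positive constant (via a Poisson-type survival estimate based on the stochastic domination), which yields $\tau_i<+\infty$ almost surely by iteration.

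The key structural observation is the following. By the very definition of $S_i$ one has $N^h\bigl((S_i-L(h),S_i]\bigr)=0$, and since $\supp h\subset(0,L(h)]$ this entails, for every $t>0$,
\begin{equation*}
\Lambda^h(S_i+t)\;=\;\left(\lambda+\int_{[S_i,\,S_i+t)} h(S_i+t-u)\, N^h(du)\right)^+.
\end{equation*}
Thus the intensity after $S_i$ depends only on the jumps of $N^h$ occurring after $S_i$: the whole past up to time $S_i$ has been forgotten. This is precisely the purpose of imposing a gap of length $L(h)$ in the definition of the renewal times.

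Since $S_i$ is an $(\mathcal{F}_t)$-stopping time, the strong Markov property of the Poisson point process $Q$ ensures that its translate $\tilde Q^{(i)}$, defined by $\tilde Q^{(i)}(A\times B):=Q((S_i+A)\times B)$ for Borel $A,B\subset(0,+\infty)$, is independent of $\mathcal{F}_{S_i}$ and has the same law as $Q$. In view of \eqref{eq_EDS} and of the identity above, the shifted point process $\tilde N^{(i)}$ defined by $\tilde N^{(i)}(A):=N^h(S_i+A)$ for Borel $A\subset(0,+\infty)$ satisfies the same SDE as $N^h$ with empty initial condition, now driven by $\tilde Q^{(i)}$. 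By pathwise uniqueness (Proposition \ref{prop_EDS}), $\tilde N^{(i)}$ is therefore a deterministic functional of $\tilde Q^{(i)}$ alone, hence is independent of $\mathcal{F}_{S_i}$ and distributed as $N^h$ itself. Assertion (i) follows since $(\tau_{i+1},W_{i+1})$ is obtained from $\tilde N^{(i)}$ by the very same procedure that produces $(\tau_1,W_1)$ from $N^h$; assertion (ii) follows from the fact that, before its first jump, the intensity of $\tilde N^{(i)}$ equals the constant $\lambda$ (the convolution term vanishes), so $U^{i+1}_1-S_i\sim\Exp(\lambda)$, with independence across $i$ inherited from the independence of $\tilde N^{(i)}$ from $\mathcal{F}_{S_i}$. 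The main technical obstacle is the rigorous execution of the strong Markov step: one must carefully combine the strong Markov property of $Q$ at the stopping time $S_i$, the intensity decoupling displayed above, and pathwise uniqueness of the SDE, to legitimately identify $\tilde N^{(i)}$ as a fresh, independent copy of the Hawkes process.
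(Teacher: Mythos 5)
Your proposal is correct and takes essentially the same route as the paper: the key point in both is the decoupling identity (no jumps of $N^h$ in $(S_i-L(h),S_i]$ together with $\supp h\subset(0,L(h)]$ make the intensity after $S_i$ depend only on post-$S_i$ jumps), combined with the regeneration of the driving Poisson measure $Q$ at $S_i$, which the paper expresses through independence of $Q$ on the a.s.\ disjoint regions before $S_i$ and after $U^{i+1}_1$ while you package it as a strong Markov property plus pathwise uniqueness of the SDE. Like the paper, you leave the a.s.\ finiteness of the $\tau_i$ to the comparison arguments (Proposition \ref{prop:tau_w_h_negative}), so no substantive difference remains.
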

\begin{proof}
Let $Q$ be a two-dimensional Poisson point process, and let $N^h$ generated by $Q$ as in proposition \ref{prop_EDS}, the $(\tau_i, W_i)_i$ being defined as before.

Given $\tau_1$, remark that $U^2_1$ is the first jump of $Q$ on $(\tau_1, +\infty) \times [0, \infty]$. Indeed, using successively the definition of $L(h)$ and $\tau_1$ we deduce that:
\begin{align*}
\Lambda(\tau_1) &= \left( \lambda +\int_{(-\infty,\tau_1)} h(t-u)N^h(du)\right)^+\\
&=\left( \lambda +\int_{(\tau_1-L(h),\tau_1)} h(t-u)N^h(du)\right)^+\\
&= \lambda.
\end{align*}

By translation, $U^2_1 - \tau_1$ is the first jump of a Poisson point process $Q'$ on $(0, \infty) \times [0, \infty]$, independent of $Q$ on $(0, \tau_1) \times [0, \infty]$, and $U^2_1 - \tau_1$ is independent of $\tau_1 = S_1$. \\
Since the jumps of $N^h$ before $\tau_1$ do not influence  $\Lambda^h(t)$ for $t > \tau_1$ (by definition of $L(h)$ and $\tau_1$), $$\tau_{2} = \inf \{t> U^{2}_1 - S_1, N^h((t+S_1-L(h),t+S_1]) = 0\},$$ only depends on $Q$ on $(U^2_1, +\infty) \times [0, \infty]$. Moreover, $(0, S_1) \times [0, \infty]$ and $(U^2_1, +\infty) \times [0, \infty]$ are almost surely disjoints. Hence $Q$ on $(U^2_1, +\infty) \times [0, \infty]$ is independent of $Q$ on $(0, S_1) \times [0, \infty]$ so that  $\tau_2$ is independent of $(\tau_1,W_1)$. \\
The number of points in the second time window $W_{2} = N^h([U^{2}_1, S_{2}]) = N^h([S_1, S_{2}])$ only depends on $Q$ on $(U^2_1, +\infty) \times [0, \infty]$. $W_1$ depends on $Q$ on $(0, S_1) \times [0, \infty]$. For the same reason as before, $W_2$ is independent of $(\tau_1,W_1)$. The same argument can be used for each $k$: as the $(S_k)_k$ split $\R^+$ in disjoints intervals, then $Q$ on each of these intervals is independent of $Q$ of another interval. 
	
	In particular, $U^1_1 = U^1_1 -S_0$, $U^2_1 - S_1$ (and all the following) are independent and can be defined as the first jump of a Poisson point process on $(0, +\infty) \times [0, \infty]$. Then they follows an exponential law of parameter $\lambda$. \\
	Using time translation, we see that $\tau_1$, $\tau_2$ (and so on) are defined the same way and follow the same law. Then $W_1$, $W_2$ (and so on) are defined the same way and follow the same law. 
	\end{proof}
This construction indicates the renewal structure generated by the Hawkes process. We shall use this structure to prove limit theorems.
\medskip

To this end remark that 
\begin{equation}\label{eqcumul1}
N_t^h = \sum_{i=1}^{\infty} \, \1_{U_i \leq t} = \sum_{i=1}^{\infty} \, \sum_{j=1}^{W_i} \1_{U_i^j \leq t} \, .
\end{equation}
Introduce the renewal process associated to the $S_i$'s
\begin{equation}\label{eqcumul2}
M_t^h := \, \sum_{i=1}^{\infty} \, \1_{S_i\leq t} \, .
\end{equation}
Since $S_i=\sum_{k=1}^i \, \tau_k$ we may introduce 
\begin{equation}\label{def:hatN}\hat{N}^h_t := \sum_{i =1}^{\infty} W_i \1_{S_{i} \leq t} = \sum_{i=1}^{M^h_t} W_i. \end{equation}
For any $t \in \R^+$, the current window is the $M^h_t+1$-th. $\hat{N}^h_t$ includes only the jumps up to the $M^h_t$-th window, while $N^h_t$ can have more jumps. In particular, 
\begin{equation}\label{eqcumul3}
\hat{N}^h_t \leq N^h_t \leq \hat{N}^h_t + W_{M^h_t+1} \quad \text{a.s.}
\end{equation}
We thus have $$N_t^h = \sum_{i=1}^{M^h_t} W_i + R_t^h$$ for some renewal process $M_t^h$ and a remaining term  $R_t^h \leq W_{M^h_t+1}$, the $W_i$'s being i.i.d.. Such processes are known as \textit{cumulative processes} in the literature. A LLN and a CLT for $\hat{N}_t^h$ can be found in \cite{asmussen} theorems 3.1 and 3.2. The LD principle for cumulative processes is studied in \cite{lefevld} in the special case $$W_i=F(\tau_i)$$ for some non-negative, bounded and continuous function $F$ (see the references in \cite{lefevld} for some previous results in still more specific cases). These results do not apply for Hawkes processes, and we had to establish a more general LD principle in the companion paper \cite{CCC2}. In order to get similar results for $N^h_t/t$ it will remain to study the remaining $R_t^h$.
\medskip

\subsection{LLN, CLT and asymptotic deviations for signed reproduction function $h$. \\ \\} 
\label{subsec_results_LFGN_neg_h}

\noindent We now state the main results of the paper. The key is to get enough moments for the $(\tau_i,W_i)$. The first result deals with this problem.

\begin{prop}
\label{prop:tau_w_h_negative}
Let $h$ be a signed function satisfying Assumptions \ref{assumptions}. Let us consider the Hawkes process $N^h$ and the i.i.d.  couples of random variables $(\tau_i,W_i)$ defined in \eqref{def:taui}-\eqref{def:Wi}. 
\begin{itemize}
\item[$i)$] For $\alpha < \, \alpha_0:= \min \left(\lambda \; , \; \frac{||h^+||_1 - \ln(||h^+||_1) - 1}{L(h)}\right)$ we have $\E(e^{\alpha \tau_1}) < +\infty$. 
\item[$ii)$] There exists $\theta_0 >0$ such that for $\theta<\theta_0$, $\mathbb E(e^{\theta W_1}) < +\infty$.
\end{itemize} 
In particular $\tau_1$ and $W_1$ have polynomial moments of any order. 
\end{prop}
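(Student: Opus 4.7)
The strategy is to reduce both estimates to analogous bounds for the dominating linear subcritical Hawkes process $N^{h^+}$ of Proposition~\ref{prop_EDS}. Under its coupling, $N^h$ and $N^{h^+}$ have the same initial intensity $\lambda$ and therefore share the same first jump $U_1\sim\Exp(\lambda)$. Since $N^h\le N^{h^+}$ in the sense of measures, every window $(t-L(h),t]$ empty for $N^{h^+}$ is a fortiori empty for $N^h$; consequently $\tau_1\le\tau_1^{h^+}$ and $W_1\le N^{h^+}([0,\tau_1^{h^+}])=:W_1^{h^+}$. Both parts therefore follow from the corresponding estimates for the linear subcritical process $N^{h^+}$, whose reproduction kernel satisfies $p:=\|h^+\|_1<1$.

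For $N^{h^+}$ I would invoke the classical Hawkes--Oakes immigration--birth (cluster) representation: $N^{h^+}$ is a superposition of independent clusters whose roots form a Poisson process of rate $\lambda$ on $\R^+$, each cluster being an independent Galton--Watson tree with Poisson$(p)$ offspring and offspring shifts drawn from the density $h^+(t)\,dt/p$, supported in $(0,L(h)]$. The total progeny $Z$ of a single cluster has the Borel$(p)$ law $\Proba(Z=n)=(np)^{n-1}e^{-np}/n!$, whose moment generating function $\varphi(\theta):=\E(e^{\theta Z})$ is finite if and only if $\theta<p-\ln p-1$ (radius of convergence computed via Stirling). Moreover, the temporal extent of any cluster is bounded by $ZL(h)$, since each generation adds at most $L(h)$ in time.

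For part~(i), decompose $\tau_1^{h^+}=U_1+(\tau_1^{h^+}-U_1)$; the strong Markov property at $U_1$ makes the two summands independent, so $\E(e^{\alpha\tau_1^{h^+}})=\E(e^{\alpha U_1})\,\E(e^{\alpha(\tau_1^{h^+}-U_1)})$. The first factor is finite iff $\alpha<\lambda$, which explains the $\lambda$ contribution to $\alpha_0$. For the second, I organise the cycle through its successive clusters: between the end of the aggregate cluster process and the next Poisson immigrant there is an exponential gap, and the cycle ends as soon as this gap exceeds $L(h)$. A Chernoff-type argument combining Poisson immigration with the Borel tail on the cluster extents should yield $\Proba(\tau_1^{h^+}-U_1>s)\le C\exp(-(p-\ln p-1)\,s/L(h))$, so that $\E(e^{\alpha(\tau_1^{h^+}-U_1)})<\infty$ for every $\alpha<(p-\ln p-1)/L(h)$. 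The product of the two factors gives the claimed range $\alpha<\alpha_0$.

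For part~(ii), the same cluster description gives a compound structure $W_1^{h^+}=\sum_{i=1}^G Z_i$, where the $Z_i$'s are Borel$(p)$-distributed cluster sizes and $G$ counts the clusters active in the cycle (with an exponential tail thanks to~(i)). Conditioning on $G$ and using $\varphi(\theta)\to 1$ as $\theta\to 0^+$, a standard generating-function computation yields $\E(e^{\theta W_1^{h^+}})<\infty$ for every sufficiently small $\theta>0$. The main obstacle is part~(i): extracting the sharp exponent $(p-\ln p-1)/L(h)$ encodes a competition between the Poisson immigration rate and the heavy Borel tail of cluster extents, since long cycles arise precisely from one or a few large clusters that push every potential renewal gap further into the future, and such contributions must be penalised exactly by their Borel probability. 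Making this joint Chernoff/Markov bookkeeping precise (together with the proper handling of the dependence between $G$ and the $Z_i$'s in~(ii)) is the delicate technical step.
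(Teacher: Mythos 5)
Your first step---the coupling with the purely self-exciting process $N^{h^+}$ under which $U^1_1$ is common to both processes, $\tau_1\le\tau_1^{+}$ and $W_1\le W_1^{+}:=N^{h^+}([0,\tau_1^{+}])$---is exactly the paper's reduction, and the cluster ingredients you list (immigrants at rate $\lambda$, Borel$(\|h^+\|_1)$ total progeny $Z$, cluster extent at most $Z\,L(h)$) are the right raw material. The gap is in part $(i)$: the inequality you announce, $\Proba(\tau_1^{+}-U_1>s)\le C\exp\bigl(-(\|h^+\|_1-\ln\|h^+\|_1-1)s/L(h)\bigr)$, is precisely the hard statement, and you do not prove it (``should yield'', ``the delicate technical step''). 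Moreover it cannot hold in this form, with an exponent free of $\lambda$: since $\Lambda^{h^+}\ge\lambda$, the thinning construction of Proposition \ref{prop_EDS} shows that $N^{h^+}$ contains a Poisson process of rate $\lambda$ (the immigrants), so an $L(h)$-gap of $N^{h^+}$ forces an $L(h)$-gap of that Poisson stream; the waiting time after $U_1$ for such a gap has mean $(e^{\lambda L(h)}-1)/\lambda$ and exponential decay rate of order $\lambda e^{-\lambda L(h)}$, which for fixed $\|h^+\|_1$ is far smaller than $(\|h^+\|_1-\ln\|h^+\|_1-1)/L(h)$ once $\lambda L(h)$ is large. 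Hence $\lambda$ does not enter only through the factor $\E(e^{\alpha U_1})$: the immigration rate and the cluster extents interact in the regeneration time, and controlling that interaction is the actual content of the step. The paper does not redo this computation; it embeds the problem into an $M/G/\infty$ queue (customers $=$ immigrants, service time $=$ cluster lifetime plus $L(h)$), bounds $\tau_1^{+}$ by the first emptying time $\mathcal{T}_1^{+}$ of this queue, and quotes Proposition 2.6 of \cite{costa} for the exponential moments of $\mathcal{T}_1^{+}$. Either you invoke that result as the paper does, or you must actually prove the busy-period estimate; your sketch does neither.

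For part $(ii)$, your compound representation $W_1^{+}=\sum_{i=1}^{G}Z_i$ hides a genuine dependence: the cycle lasts exactly as long as clusters keep overlapping, so $G$ is determined by the extents of the $Z_i$'s (long clusters prolong the busy period, bring in more immigrants, and bias the clusters that are counted), and conditioning on $G$ cannot be combined with $\varphi(\theta)^{G}$ without further argument; you acknowledge this but do not resolve it. The paper's route is different and avoids the issue: it takes from \cite{bordenave} that $\frac1t\ln\E\bigl(e^{\theta N^{h^+}([0,t])}\bigr)\to\mu(\theta)=\lambda(\E(e^{\theta\mathcal{S}})-1)<\infty$ for $\theta<\|h^+\|_1-\ln\|h^+\|_1-1$, writes $\E(e^{\theta W_1^{+}})\le\sum_k\E\bigl(e^{\theta N^{h^+}([0,k])}\,\1_{k-1\le\tau_1^{+}<k}\bigr)$, and decouples each term via the elementary inequality $ab\le\beta_k a^2+\beta_k^{-1}b^2$, using the exponential tail of $\tau_1^{+}$ from part $(i)$ and a summable choice of $\beta_k$. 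To salvage your compound-sum picture you would need a Wald-type or stopping-time argument handling the dependence between $G$ and the $(Z_i)$; adopting the paper's time-partition decoupling is both simpler and already covered by part $(i)$ plus the cited growth estimate.
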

The proof of this proposition is given in Section \ref{sec_proof_moment}. Actually, one can give an lower bound on $\theta_0$. This lower bound differs whether $h\le 0$ or not. \\
In the general case, the upper bound for $\theta_0$ depends on a random variable $\mathcal{S}$ with distribution $$\mathbb P(\mathcal{S}=k) = \frac{e^{-k\|h^+\|_1}(k\|h^+\|_1)^{k-1}}{k!}. $$
Using a comparison with a queue process that will be detailed in the proof of Proposition \ref{prop:tau_w_h_negative} below one prove that  $\theta_0$ can be chosen as
$$\theta_0 < ||h^+||_1 - \ln(||h^+||_1) - 1\quad \text{and}\quad \lambda (\mathbb E(e^{2 \theta_0 \mathcal{S}})-1)< \alpha_0.$$
In the case of pure inhibition, i.e. $h\leq 0$, the quantity $||h^+||_1 - \ln(||h^+||_1)$ becomes infinite. However using a comparison with a Poisson Process one can get another explicit bound for $\theta_0$, whose proof will also be given in Section \ref{sec_proof_moment}.
\begin{prop}\label{propWexpnegative}
If $h \leq 0$, one can choose $\theta_0 < - \, \ln\left(1- e^{- \lambda L(h)}\right)$ in proposition \ref{prop:tau_w_h_negative}.
\end{prop}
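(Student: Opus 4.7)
The key observation is that the assumption $h\le 0$ forces $\Lambda^h(u)=(\lambda+\int h(u-s)N^h(ds))^+\le \lambda$ for all $u$, which should allow one to dominate $N^h$ by a homogeneous Poisson process of intensity $\lambda$. I would construct this domination via the common Poisson driving measure $Q$ of Proposition \ref{prop_EDS}: set
$$P := \int_{(0,+\infty)^2}\delta_u\,\1_{\theta\le \lambda}\,Q(du,d\theta).$$
Since $\1_{\theta\le \Lambda^h(u)}\le \1_{\theta\le \lambda}$ pointwise, every jump of $N^h$ is a jump of $P$, and because $\Lambda^h\equiv \lambda$ before the first jump, the first jumps coincide: $U_1=U_1^P$.

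Next, I would define the exact analogue of the renewal time for $P$:
$$\tilde\tau_1:=\inf\{t>U_1^P:\,P((t-L(h),t])=0\},\qquad \tilde W_1:=P([0,\tilde\tau_1]).$$
The inclusion of jumps then gives $\tau_1\le \tilde\tau_1$ (a $P$-gap of length $L(h)$ is automatically an $N^h$-gap), and consequently
$$W_1=N^h([0,\tau_1])\le P([0,\tilde\tau_1])=\tilde W_1\qquad \text{a.s.}$$
So it suffices to prove $\E(e^{\theta \tilde W_1})<+\infty$ for $\theta<-\ln(1-e^{-\lambda L(h)})$.

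The point of this reduction is that $\tilde W_1$ has a completely explicit law. Indeed, if $(T_i)_{i\ge 2}$ denotes the i.i.d.\ $\Exp(\lambda)$ inter-arrival times of $P$ after $U_1^P$, then $\tilde W_1=K$ where $K=\min\{k\ge 1:T_{k+1}>L(h)\}$ is geometric with success probability $e^{-\lambda L(h)}$:
$$\Proba(K=k)=\bigl(1-e^{-\lambda L(h)}\bigr)^{k-1}e^{-\lambda L(h)},\qquad k\ge 1.$$
A direct computation of the generating series yields
$$\E(e^{\theta K})=\frac{e^{\theta-\lambda L(h)}}{1-e^{\theta}(1-e^{-\lambda L(h)})},$$
which is finite precisely when $e^{\theta}(1-e^{-\lambda L(h)})<1$, i.e.\ $\theta<-\ln(1-e^{-\lambda L(h)})$. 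This gives the claimed lower bound for the admissible $\theta_0$.

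There is no serious obstacle here once the Poisson domination is in place; the only small care needed is to verify that the coupling produced by $Q$ really gives $\tau_1\le\tilde\tau_1$ (rather than the reverse), which follows from the set-theoretic inclusion of jump sets and the fact that the first jumps coincide.
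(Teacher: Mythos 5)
Your proof is correct and follows essentially the same route as the paper: dominating $N^h$ by the rate-$\lambda$ Poisson process driven by the same measure $Q$ (valid since $h\le 0$ gives $\Lambda^h\le\lambda$ and the first jumps coincide), bounding $\tau_1$ and $W_1$ by the corresponding Poisson quantities, and identifying the Poisson jump count up to its renewal time as a geometric variable of parameter $e^{-\lambda L(h)}$, whose exponential moments are finite exactly for $\theta<-\ln\left(1-e^{-\lambda L(h)}\right)$. The only cosmetic difference is that you compute the geometric generating function explicitly, whereas the paper just states the finiteness condition.
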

\begin{rmq} \label{rem:cassimple}
Exacts computations for moments of $\tau$ and $W$ are difficult. Let us consider here and in section \ref{secexample} some specific cases.
\smallskip

Notice that for $h=0$ (i.e. in the case of a Poisson process), $W_1=1$ has exponential moments of any order and $\tau_1$ whose distribution is exponential with parameter $\lambda$, has exponential moments up to order $\lambda$.
\medskip

Another basic case is the \emph{canceling of intensity} case, i.e. choosing the reproduction function as  $g=-\lambda \, \1_{[0,A]}$ for some positive $\lambda$ and $A$. We have seen in Proposition \ref{prop_minoration} that the corresponding $N_t^g$ is smaller than any $N_t^h$ with $L(h)=A$. Since for $t \in (U_1^1,U_1^1+A)$ it holds $\Lambda^h(t)=0$, it immediately follows that $\tau_1=U_1^1+A$ and $W_1=1$, so that $$(W_1,\tau_1)\sim (1, A+\mathcal E(\lambda)) \, ,$$ so that $\mathbb E(\tau_1)=A + \lambda^{-1}$, $\Var(\tau_1)=\lambda^{-2}$, $\alpha_0=\lambda$ and $\theta_0=+\infty$. \hfill $\diamondsuit$
\end{rmq} 
\medskip

From these moments properties and the renewal structure of the Hawkes process, we will derive the following asymptotic results:
\begin{thm}[\bf{Law of Large Numbers}]
\label{thm:lln}
Let $h$ be a signed function satisfying Assumptions \ref{assumptions} and  consider the Hawkes process $N^h$ given by \eqref{eq_EDS}.
Then we have the following:  
	$$ \frac{N^h_t}{t} \cvg[a.s.]{t}{\infty} \frac{\E[W_1]}{\mathbb E(\tau_1)} \, . $$ 
\end{thm}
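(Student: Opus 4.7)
The plan is to exploit the cumulative-process decomposition
$$N_t^h \;=\; \hat N_t^h \,+\, R_t^h \;=\; \sum_{i=1}^{M_t^h} W_i \,+\, R_t^h, \qquad 0\le R_t^h \le W_{M_t^h+1},$$
already displayed in \eqref{eqcumul3}. By Proposition \ref{lem_tauW_iid_loiexp} the pairs $(\tau_i,W_i)$ are i.i.d., and by Proposition \ref{prop:tau_w_h_negative} both $\tau_1$ and $W_1$ are integrable (in fact they admit exponential moments). Moreover $\mathbb E[\tau_1]\in(0,+\infty)$ since $\tau_1\ge L(h)>0$ deterministically in the only nontrivial situation (and in any case $\tau_1$ is strictly positive with positive probability), and $\mathbb E[W_1]\ge 1$ since each window contains at least one jump.

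First I would deal with the renewal counter $M_t^h$. Since $S_n=\sum_{k=1}^n\tau_k$, the strong law of large numbers gives $S_n/n\to\mathbb E[\tau_1]$ almost surely; the standard sandwich $S_{M_t^h}\le t<S_{M_t^h+1}$ together with $M_t^h\to\infty$ a.s.\ then yields the elementary renewal theorem
$$\frac{M_t^h}{t}\;\underset{t\to\infty}{\longrightarrow}\;\frac{1}{\mathbb E[\tau_1]}\qquad\text{a.s.}$$
Next, applying the SLLN to the i.i.d.\ sequence $(W_i)_{i\ge1}$ gives $\frac1n\sum_{i=1}^n W_i\to\mathbb E[W_1]$ a.s. Because $M_t^h\to\infty$ a.s., the usual change of time for a.s.\ convergent sequences implies $\frac{1}{M_t^h}\sum_{i=1}^{M_t^h}W_i\to\mathbb E[W_1]$ a.s., and multiplying by the previous limit yields
$$\frac{\hat N_t^h}{t}\;=\;\frac{M_t^h}{t}\cdot\frac{1}{M_t^h}\sum_{i=1}^{M_t^h}W_i\;\underset{t\to\infty}{\longrightarrow}\;\frac{\mathbb E[W_1]}{\mathbb E[\tau_1]}\qquad\text{a.s.}$$

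The only remaining point is to check that the residual term $R_t^h$ is negligible, and this is the step which requires a little care. Using $R_t^h\le W_{M_t^h+1}$, it suffices to show $W_{M_t^h+1}/t\to 0$ a.s. Writing
$$\frac{W_{M_t^h+1}}{t}\;=\;\frac{W_{M_t^h+1}}{M_t^h+1}\cdot\frac{M_t^h+1}{t},$$
the second factor is bounded in $t$ by the renewal limit just proved, so it is enough to see that $W_n/n\to 0$ a.s.\ as $n\to\infty$. But this follows at once from the SLLN applied to $(W_i)$, since
$$\frac{W_n}{n}\;=\;\frac{1}{n}\sum_{i=1}^n W_i\;-\;\frac{n-1}{n}\cdot\frac{1}{n-1}\sum_{i=1}^{n-1}W_i\;\longrightarrow\;\mathbb E[W_1]-\mathbb E[W_1]=0.$$
(Alternatively, since $W_1$ has exponential moments by Proposition \ref{prop:tau_w_h_negative}, a direct Borel--Cantelli argument gives the same conclusion.) Combining the three pieces yields $N_t^h/t\to\mathbb E[W_1]/\mathbb E[\tau_1]$ a.s., as claimed.

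The main obstacle is conceptual rather than technical: one must recognise that the proper renewal-theoretic objects are $M_t^h$ and the pair $(\tau_i,W_i)$, and verify that the Hawkes-specific residual $R_t^h$ corresponding to the uncompleted current window can indeed be absorbed thanks to the moment control on $W_1$ given by Proposition \ref{prop:tau_w_h_negative}.
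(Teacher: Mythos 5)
Your proof is correct, and it rests on exactly the same decomposition as the paper: $\hat N^h_t \le N^h_t \le \hat N^h_t + W_{M^h_t+1}$ with the i.i.d.\ pairs $(\tau_i,W_i)$ from Proposition \ref{lem_tauW_iid_loiexp}, followed by control of the residual term. The differences are in how the two black boxes are handled. For the cumulative part the paper simply invokes Theorem 3.1 of Chapter 6 in \cite{asmussen}, whereas you reprove it from scratch (SLLN for $S_n/n$, the sandwich $S_{M^h_t}\le t<S_{M^h_t+1}$ giving $M^h_t/t\to 1/\E(\tau_1)$ a.s., then the time-changed SLLN for the $W_i$); this makes the argument self-contained but is mathematically the same statement. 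For the residual, the paper uses the exponential moments of $W_1$ from Proposition \ref{prop:tau_w_h_negative} together with Borel--Cantelli to get the stronger fact $W_n/n^{\alpha}\to 0$ a.s.\ for every $\alpha>0$, because the same estimate must also deliver $W_{M^h_t+1}/\sqrt t\to 0$ for the CLT; your telescoping-SLLN argument shows $W_n/n\to 0$ a.s.\ using only $\E[W_1]<\infty$, which is weaker but entirely sufficient for the LLN, and you correctly note the Borel--Cantelli alternative. So your route buys a more elementary, citation-free proof under weaker moment assumptions for this particular theorem, while the paper's route is calibrated to serve the LLN and CLT simultaneously; both are sound.
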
	
Thanks to our comparison results and to \eqref{eq:lln_positive} we have $$\frac{\lambda}{1+\lambda L(h)} \, \leq \, \frac{\E[W_1]}{\mathbb E(\tau_1)} \, \leq \, \frac{\lambda}{1 - ||h^+||_1} \, .$$
\medskip

Our method will also provide us with a CLT.
\begin{thm}[\bf{Central Limit Theorem}]
\label{thm:clt}
Let $h$ be a signed function satisfying Assumptions \ref{assumptions} and  consider the Hawkes process $N^h$ given by \eqref{eq_EDS}.
Then 
	$$ \sqrt t \, \left(\frac{N^h_t}{t} \, -  \, \frac{\E[W_1]}{\mathbb E(\tau_1)}\right) \underset{t\to\infty}{\Longrightarrow} \mathcal{N}(0,\sigma^2)$$with $$\sigma^2 = \frac{\Var\left(W_1-\tau_1 \, \frac{\E[W_1]}{\mathbb E(\tau_1)}\right) }{\mathbb E(\tau_1)} \, .$$ 
\end{thm}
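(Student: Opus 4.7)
The strategy is to decompose $N_t^h$ into the cumulative part $\hat N_t^h$ and a remainder, apply the classical CLT for cumulative (renewal--reward) processes to $\hat N_t^h$, and show that the remainder is $o_{\Proba}(\sqrt t)$, so that Slutsky's lemma yields the result. Recall from \eqref{def:hatN}--\eqref{eqcumul3} that
$$N_t^h \, = \, \hat N_t^h + R_t^h, \qquad \hat N_t^h = \sum_{i=1}^{M_t^h} W_i, \qquad 0 \leq R_t^h \leq W_{M_t^h+1},$$
where $(\tau_i,W_i)$ are i.i.d.\ by Proposition \ref{lem_tauW_iid_loiexp} and $M_t^h$ is the renewal counting process associated with the $S_i$'s.

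First, by Proposition \ref{prop:tau_w_h_negative}, both $\tau_1$ and $W_1$ have exponential (hence all polynomial) moments; in particular they are in $L^2$. The CLT for cumulative processes (Asmussen, Ch.~V, Theorem~3.2) then applies directly and gives
$$ \frac{\hat N_t^h - t \, \frac{\E[W_1]}{\E[\tau_1]}}{\sqrt t} \, \underset{t \to \infty}{\Longrightarrow} \, \mathcal N(0,\sigma^2), \qquad \sigma^2 = \frac{\Var\!\bigl(W_1 - \tau_1 \, \tfrac{\E[W_1]}{\E[\tau_1]}\bigr)}{\E[\tau_1]}.$$
This is the main probabilistic input; the whole content of the renewal structure from Section~2.3 is exactly what makes it applicable.

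It therefore remains to prove that $R_t^h/\sqrt t \to 0$ in probability. Since $0 \leq R_t^h \leq W_{M_t^h+1}$, fix $\varepsilon>0$ and any constant $K > 1/\E[\tau_1]$. By the elementary renewal theorem, $\Proba(M_t^h + 1 > Kt) \to 0$ as $t\to\infty$. On the complementary event,
$$W_{M_t^h+1} \leq \max_{1 \leq i \leq \lceil Kt \rceil} W_i,$$
and by Proposition \ref{prop:tau_w_h_negative}$(ii)$ we may choose $\theta \in (0,\theta_0)$ with $C := \E[e^{\theta W_1}] < \infty$. A union bound combined with Markov's inequality yields
$$\Proba\!\left(\max_{1 \leq i \leq \lceil Kt \rceil} W_i > \varepsilon \sqrt t \right) \leq \lceil Kt \rceil \, C \, e^{-\theta \varepsilon \sqrt t} \, \underset{t\to\infty}{\longrightarrow} \, 0.$$
Thus $\Proba(R_t^h > \varepsilon \sqrt t) \to 0$, so $R_t^h / \sqrt t \to 0$ in probability.

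Combining the two displays via Slutsky's lemma with
$$\frac{N_t^h - t \, \frac{\E[W_1]}{\E[\tau_1]}}{\sqrt t} = \frac{\hat N_t^h - t \, \frac{\E[W_1]}{\E[\tau_1]}}{\sqrt t} + \frac{R_t^h}{\sqrt t}$$
gives the stated convergence. The only subtlety is the control of the last (incomplete) window: a priori $R_t^h$ is dominated by a random $W$-variable whose index depends on $t$, but the exponential integrability of $W_1$ (controlling the growth of the running maximum) combined with the linear growth of $M_t^h$ is enough to push this term below the $\sqrt t$ scale.
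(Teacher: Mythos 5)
Your proof is correct and follows essentially the same route as the paper: the CLT for cumulative processes applied to $\hat N^h_t$ (Asmussen) combined with negligibility of the last incomplete window, your only variation being that you control $W_{M^h_t+1}/\sqrt t$ in probability via a union bound over at most $\lceil Kt\rceil$ indices, whereas the paper gets almost sure convergence of $W_n/\sqrt n$ by Borel--Cantelli (using the same exponential moments) and then composes with $M^h_t/t\to 1/\E(\tau_1)$. One small point: the claim $\Proba(M^h_t+1>Kt)\to 0$ for $K>1/\E(\tau_1)$ should be attributed to the (strong or weak) law of large numbers for the renewal counting process rather than to the elementary renewal theorem, which concerns $\E[M^h_t]/t$ and, via Markov's inequality, would only yield a bound strictly less than $1$, not one tending to $0$.
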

\medskip

We finally state deviation results based on the results in the companion paper \cite{CCC2}, in which we obtain large deviation for general cumulative processes. To this end we need to introduce some notations
\begin{definition}\label{defcramer}
We introduce the Cramer transform for $(a,b) \in \mathbb R^2$, $$\Lambda^*(a,b)= \sup_{x,y} \left\{ax + by - \ln \left(\mathbb E\left[e^{x\tau_1+y W_1}\right]\right)\right\} \, .$$ 

We also define for $z \in \mathbb R^+$, $$J(z) = \inf_{\beta>0} \, \left(\beta \, \Lambda^*\left(\frac{1}{\beta} \, , \, \frac{z}{\beta}\right)  \right) \, .$$ Similarly we define $\Lambda^*_n$ and $J_n$ replacing $W_1$ by $\min(W_1,n)$. Finally define $$\tilde J(z) = \sup_{\delta>0} \, \liminf_{n \to \infty} \, \inf_{|y-z|<\delta} J_n(y) \, .$$
\end{definition}
Thanks to Proposition \ref{prop:tau_w_h_negative} we may apply Theorem 2.4 in \cite{CCC2}, telling us that the distributions of $\hat N^h_t/t$ satisfy asymptotic deviation inequalities 
\begin{thm}\label{thmLD}
Recall that $\theta_0$ is defined in Proposition \ref{prop:tau_w_h_negative} (ii).
\begin{itemize}
\item If $\theta_0=+\infty$, the laws of the family $\hat N^h_t/t$ satisfy a large deviation principle with rate function $\tilde J$, i.e
\begin{enumerate} 
\item for any closed set $\mathcal C \in \mathbb R$, $$\limsup_{t \to \infty} \, \frac 1t \; \ln \mathbb P\left(\hat N^h_t/t \, \in \, \mathcal C\right) \, \leq \, - \, \inf_{m\in \mathcal C} \, \tilde J(m)$$
\item for any open set $\mathcal O \in \mathbb R$, $$\liminf_{t \to \infty} \, \frac 1t \; \ln \mathbb P\left(\hat N^h_t/t \, \in \, \mathcal O\right) \, \leq \, - \, \inf_{m\in \mathcal O} \, \tilde J(m) \, ,$$
\end{enumerate}
\item If $\theta_0<+\infty$, denoting $m=\mathbb E(W_1)/\mathbb E(\tau_1)$ we have for all $a>0$ $$\limsup_{t \to +\infty} \, \frac 1t \, \ln \mathbb P\left(\frac{\hat N^h_t}{t} > m +a\right) \leq - \, \min\left[\inf_{z\geq m+(a/2)} J(z) \; , \; \theta_0 a/2 \right] \, ,$$ and $$\limsup_{t \to +\infty} \, \frac 1t \, \ln \mathbb P\left(\frac{\hat N^h_t}{t} < m -a\right) \leq - \, \min\left[\inf_{z\geq m-(a/2)} J(z) \; , \; \theta_0 a/2 \right] \, .$$ We may replace $a/2$ and $\theta_0 a/2$ by $\kappa a$ and $(1-\kappa) \theta_0 a$ for any $\kappa \in (0,1)$.
\end{itemize}
\end{thm}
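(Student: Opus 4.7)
\emph{Proof plan.} The strategy is to check that $\hat N^h_t$ falls within the cumulative-process framework developed in the companion paper \cite{CCC2} and then to invoke its Theorem 2.4 verbatim; that theorem is precisely tailored to the present setting, so all that remains here is a verification of hypotheses.

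First, by \eqref{def:hatN} one has $\hat N^h_t = \sum_{i=1}^{M^h_t} W_i$, where $M^h_t$ counts the renewals $S_k=\sum_{i=1}^k \tau_i$, and by Proposition \ref{lem_tauW_iid_loiexp} the couples $(\tau_i,W_i)_{i\geq 1}$ are i.i.d. Moreover $\tau_1\geq U_1^1\sim \Exp(\lambda)$, so $\tau_1>0$ almost surely, and $W_1\geq 1$ by construction of a window. These structural features are exactly what is needed to regard $\hat N^h_t$ as a cumulative process in the sense of \cite{CCC2}.

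Second, the key analytic hypotheses of \cite[Theorem 2.4]{CCC2} are exponential integrability of $\tau_1$ and (partial) exponential integrability of $W_1$. These are supplied by Proposition \ref{prop:tau_w_h_negative}: $\mathbb E(e^{\alpha \tau_1}) < +\infty$ for $\alpha<\alpha_0$ and $\mathbb E(e^{\theta W_1}) < +\infty$ for $\theta < \theta_0$. Joint exponential integrability $\mathbb E(e^{\alpha'\tau_1+\theta' W_1}) < +\infty$ for sufficiently small $\alpha',\theta'>0$ then follows at once from Cauchy--Schwarz. Hence the Cramér transform $\Lambda^*$ and the rate functions $J$, $J_n$, $\tilde J$ of Definition \ref{defcramer} are all well defined and non-degenerate.

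With these ingredients verified, \cite[Theorem 2.4]{CCC2} applies directly. When $\theta_0=+\infty$ the theorem yields a full LDP for $\hat N^h_t/t$ with rate function $\tilde J$, which is the first item of the statement. When $\theta_0<+\infty$, only upper-tail estimates survive on the $W_1$ side, and the companion theorem returns the one-sided deviation bounds with rate $\min[\inf J,\theta_0 a/2]$, together with the $\kappa$-parametrized refinement obtained from splitting the Chebyshev exponent between $a$ and $\theta_0 a$ in proportions $\kappa$ and $1-\kappa$. All the genuine difficulty has been absorbed into Proposition \ref{prop:tau_w_h_negative} (the Hawkes-specific moment control) and into \cite{CCC2} (the large deviation theory for general cumulative processes with only partial exponential moments); the present theorem is then a direct application.
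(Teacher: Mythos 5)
Your proposal is correct and follows essentially the same route as the paper: the theorem is obtained by verifying that $(\tau_i,W_i)_{i\geq 1}$ is an i.i.d.\ sequence with the exponential moment bounds of Proposition \ref{prop:tau_w_h_negative}, and then applying Theorem 2.4 of \cite{CCC2} to the cumulative process $\hat N^h_t=\sum_{i=1}^{M^h_t}W_i$ (the paper states exactly this, adding only the remark that the one-sided bounds in the case $\theta_0<+\infty$ use $J\leq \tilde J$). Your additional Cauchy--Schwarz observation for joint exponential integrability is a harmless elaboration of the same verification.
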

The latter deviation inequalities are obtained using that $J \leq \tilde J$ (see \cite{CCC2}).
\smallskip

We finally will prove
\begin{cor}\label{corLD}
Recall that $\theta_0$ is defined in Proposition \ref{prop:tau_w_h_negative} (ii).
\begin{enumerate}
\item[(1)] \quad If $\theta_0=+\infty$, $N^h_t/t$ satisfies the same LDP as $\hat N^h_t/t$.
\item[(2)] \quad If $\theta_0<+\infty$, we have for all $a> 0$
\begin{equation}\label{eqaudessus}
\limsup_{t \to \infty} \, \frac 1t \; \ln \mathbb P\left(\frac{N^h_t}{t} \,  > \, m + a \right) \leq \, - \, \min \left[\inf_{z-m \geq   \kappa a} \, J(z) \, , \, \kappa' \theta_0 a\right] \, ,
\end{equation}
with $\kappa$ and $\kappa'$ in $(0,1)$ satisfying $\kappa +2 \kappa'=1$. 
Similarly
\begin{equation}\label{eqendessous}
\limsup_{t \to \infty} \, \frac 1t \; \ln \mathbb P\left(\frac{N^h_t}{t} \,  < \, m - a \right) \leq \, - \, \min \left[\inf_{m-z \leq   \kappa a} \, J(z) \, , \, (1-\kappa)  \theta_0 a \right] \, .
\end{equation}
for $\kappa \in (0,1)$.
\end{enumerate}
\end{cor}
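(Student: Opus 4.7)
The plan rests entirely on the sandwich bound \eqref{eqcumul3}, namely $\hat N_t^h \leq N_t^h \leq \hat N_t^h + W_{M_t^h+1}$. Since the deviation principles for the cumulative process $\hat N_t^h/t$ are already provided by Theorem \ref{thmLD}, the job is to transfer them to $N_t^h/t$ by showing that the remainder $W_{M_t^h+1}$ is negligible on the exponential scale. The lower-deviation statement \eqref{eqendessous} (and the LDP lower bound for open sets when $\theta_0=+\infty$) will be essentially free because $N_t^h\geq \hat N_t^h$; the real work is the upper side.

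The key lemma I would first establish is: for every $\theta<\theta_0$ and $\epsilon>0$,
\begin{equation*}
\limsup_{t\to\infty}\frac1t\ln\mathbb P(W_{M_t^h+1}>\epsilon t)\leq -\theta\epsilon.
\end{equation*}
The subtle point is that $M_t^h$ is random and the block-index $M_t^h+1$ is correlated with the block it selects. I would decouple by conditioning:
\begin{equation*}
\mathbb P(W_{M_t^h+1}>\epsilon t)=\sum_{k\geq 0}\mathbb P(M_t^h=k,\,W_{k+1}>\epsilon t)\leq \sum_{k\geq 0}\mathbb P(S_k\leq t)\,\mathbb P(W_1>\epsilon t),
\end{equation*}
using the inclusion $\{M_t^h=k\}\subset\{S_k\leq t\}$ together with the independence between $S_k=\sum_{j\leq k}\tau_j$ and the pair $(\tau_{k+1},W_{k+1})$ granted by Proposition \ref{lem_tauW_iid_loiexp}. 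The sum equals $\mathbb E[M_t^h]+1=O(t)$ by the elementary renewal theorem (justified by Proposition \ref{prop:tau_w_h_negative}(i)), and Markov combined with Proposition \ref{prop:tau_w_h_negative}(ii) gives $\mathbb P(W_1>\epsilon t)\leq e^{-\theta\epsilon t}\mathbb E[e^{\theta W_1}]$. The $O(t)$ prefactor is killed by $t^{-1}\ln$.

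For part (2), the upper tail splits as
\begin{equation*}
\{N_t^h/t>m+a\}\subset\{\hat N_t^h/t>m+(a-\epsilon)\}\cup\{W_{M_t^h+1}>\epsilon t\}.
\end{equation*}
Choose $\epsilon=\kappa'a$ and invoke Theorem \ref{thmLD} with its internal parameter $\kappa_1=\kappa/(1-\kappa')$, which lies in $(0,1)$ because $\kappa+2\kappa'=1$. A direct check gives $\kappa_1(a-\epsilon)=\kappa a$ and $(1-\kappa_1)\theta_0(a-\epsilon)=\kappa'\theta_0 a=\theta_0\epsilon$, so both summands contribute exactly the announced rate and \eqref{eqaudessus} follows; \eqref{eqendessous} follows without any splitting since $\{N_t^h/t<m-a\}\subset\{\hat N_t^h/t<m-a\}$ and Theorem \ref{thmLD} with the same $\kappa$ applies verbatim. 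For part (1), when $\theta_0=+\infty$ the remainder estimate becomes super-exponential ($\frac1t\ln\mathbb P(W_{M_t^h+1}>\epsilon t)\to-\infty$ for every $\epsilon>0$), so $N_t^h/t$ and $\hat N_t^h/t$ are exponentially equivalent: for a closed $\mathcal C$ enclose $\{N_t^h/t\in\mathcal C\}$ in $\{\hat N_t^h/t\in\mathcal C-[0,\epsilon]\}\cup\{W_{M_t^h+1}>\epsilon t\}$ and let $\epsilon\to 0$ using the lower semicontinuity of $\tilde J$ and closedness of $\mathcal C$; for an open $\mathcal O$ and $y\in\mathcal O$, pick $B(y,\delta)\subset\mathcal O$ and combine $\mathbb P(N_t^h/t\in\mathcal O)\geq\mathbb P(\hat N_t^h/t\in B(y,\delta/2))-\mathbb P(W_{M_t^h+1}>\delta t/2)$ with the LDP lower bound of Theorem \ref{thmLD}. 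The hardest part is the decoupling of $W_{M_t^h+1}$ from $M_t^h$; once the inclusion trick above is in place, everything else is bookkeeping with the parameters $\kappa,\kappa',\epsilon$.
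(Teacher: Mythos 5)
Your proposal is correct, and its overall architecture coincides with the paper's: the sandwich $\hat N^h_t \leq N^h_t \leq \hat N^h_t + W_{M^h_t+1}$, a proof that the remainder $W_{M^h_t+1}$ is exponentially negligible at rate $\theta_0\epsilon$, exponential equivalence for part (1), the one-sided inclusion $\{N^h_t/t<m-a\}\subset\{\hat N^h_t/t<m-a\}$ for \eqref{eqendessous}, and the same parameter bookkeeping for \eqref{eqaudessus} (your $\epsilon=\kappa' a$ and internal parameter $\kappa/(1-\kappa')$ reproduce exactly the paper's $\kappa=\kappa_1\kappa_2$, $\kappa'=1-\kappa_1$ with $\kappa+2\kappa'=1$ coming from equating the two exponential terms). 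The genuine difference is in how the key remainder estimate is proved. The paper splits on the event $\{M^h_t> t^2\}$, bounds $\mathbb P(\{W_{M^h_t+1}>\delta t\}\cap\{M^h_t\leq t^2\})\leq (t^2+1)\,\mathbb P(W_1>\delta t)$ by a union bound, and controls $\mathbb P(M^h_t>t^2)$ by invoking the large deviation upper bound for renewal processes of Jiang \cite{tiefeng} together with the exponential moment of $\tau_1$ from Proposition \ref{prop:tau_w_h_negative}~(i). You instead decouple directly, using $\{M^h_t=k\}\subset\{S_k\leq t\}$ and the independence of $S_k$ from $(\tau_{k+1},W_{k+1})$ guaranteed by Proposition \ref{lem_tauW_iid_loiexp}, which reduces everything to $\big(1+\E[M^h_t]\big)\,\mathbb P(W_1>\epsilon t)$ with $\E[M^h_t]=O(t)$ by the elementary renewal theorem (one could even note $S_k\geq k\,L(h)$, so the prefactor is at most $1+t/L(h)$, making the step entirely elementary). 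Your route thus avoids the external renewal LDP and any exponential moment of $\tau_1$ at this stage, at the price of using the i.i.d./independence structure of the blocks, which is available here; the paper's route needs no independence between the index and the selected block but imports \cite{tiefeng}. Both yield the same rate $-\theta_0\epsilon$ and hence identical conclusions. One remark: in part (1) your transfer of the LDP from $\hat N^h_t/t$ to $N^h_t/t$ via exponential equivalence (letting $\epsilon\to0$ over enlarged closed sets) is sketched at essentially the same level of detail as the paper, which simply asserts that exponential equivalence suffices; neither text dwells on the regularity of $\tilde J$ needed there, so this is not a gap relative to the paper.
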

\medskip

\begin{rmq}\label{LDcancel}
Once again we may get an explicit expression for the rate function in the canceling intensity case $h= - \lambda \, \1_{[0,A]}$. Since $W_1=1$ and $\tau_1-A$ is an exponential variable with parameter $\lambda$, we have 
$$
\beta \Lambda^*\left(\frac 1\beta,\frac m\beta\right)=\sup_{x,y} \left(x+(m-\beta)y + \beta \ln \left(1- \frac{x}{\lambda}\right) - \beta Ax \right) \, .
$$
Notice that for a given $x$, $\sup_y \left(x+(m- \beta)y + \beta \ln \left(1- \frac{x}{\lambda}\right) - \beta Ax \right) < +\infty$ if and only if $\beta=m$ due to the linear term in $y$. We deduce $$J(m) = m \, \Lambda^*\left(\frac 1m,1\right) \, .$$
It  easily follows $$J(m)=\lambda(1-mA) - m + m \, \ln\left(\frac{m}{\lambda(1-mA)}\right) \, .$$
\hfill $\diamondsuit$
\end{rmq}

\bigskip

\section{One more example with explicit calculations: canceling intensity with delay.}
\label{secexample}

We already discussed in Remark \ref{rem:cassimple} the \emph{canceling of intensity} case $h=-\lambda \, \1_{[0,A]}$. 

In our second example we add a delay to the previous case: the inhibition only occurs after a lag period of length $r>0$. 
Let $\lambda >0$, $r>0$ and $A>r$ we consider $h = -\lambda \1_{[r, r+A]}$. Then $L(h) = r +A$. We can again explicitly compute the law of $W_i$ and $\tau_i$.

\begin{center}
	\begin{figure}[h!]
		\begin{subfigure}[b]{0.45 \textwidth}
			\centering
			\begin{tikzpicture}
				\draw [->, black] (0, 0) -- (6.4,0);
				\draw [->, black] (0,0) -- (0,2);
				\draw [->, black] (0, -0.5) -- (6.4,-0.5);
				
				\draw [black] (0, 2) node [right] {$\Lambda^h(t)$};
				\draw [black] (6.4, 0) node [right] {$t$};
				\draw [black] (6.4, -0.5) node [right] {$N^h$};
				\draw [black] (0, 1) node [left] {$\lambda$};
				\draw plot[mark=+] (0, 1) ;
				
				\coordinate (saut1) at (0.5,0);
				\coordinate (saut2) at (1.97,0);
				\coordinate (saut3) at (4.02,0);
				\coordinate (saut4) at (5.6,0);

				\draw [thick, blue] (0,1) -- ($(saut1)+(0,1)$) -- ($(saut1)$) -- ($(saut1) + (1.3,0)$) -- ($(saut1) + (1.3,1)$) -- ($(saut2)+(0,1)$) -- ($(saut2)$) -- ($(saut2) + (1.3,0)$) -- ($(saut2) + (1.3,1)$) -- ($(saut3)+(0,1)$) -- ($(saut3)$) -- ($(saut3) + (1.3,0)$) -- ($(saut3) + (1.3,1)$) -- ($(saut4)+(0,1)$) -- ($(saut4)$) -- (6.4, 0);
				\draw [<->,dashed,darkgray] ($(saut1)+(0,0.5)$) --  ($(saut1)+(1.3,0.5)$) ;
				\draw [black] (1.1, 0.8) node  {$A$};
				
				\draw [dashed, red] ($(saut1) + (0, -0.5)$) -- ($(saut1) + (0, 1)$)
				($(saut2) + (0, -0.5)$) -- ($(saut2) + (0, 1)$)
				($(saut3) + (0, -0.5)$) -- ($(saut3) + (0, 1)$)
				($(saut4) + (0, -0.5)$) -- ($(saut4) + (0, 1)$);
				
				\draw [thin, red] plot[mark=x] (0.5, -0.5)
				plot[mark=x] (1.97, -0.5)
				plot[mark=x] (4.02, -0.5)
				plot[mark=x] (5.6, -0.5);
			\end{tikzpicture}
			\caption{Example of Hawkes process : canceling intensity without delay, $h=-\lambda \, \1_{[0,A]}$.}
		\end{subfigure}
		\hspace{1cm}
		\begin{subfigure}[b]{0.45\textwidth}
			\centering
			\begin{tikzpicture}[scale = 1]
			\draw [->, black] (0, 0) -- (6.4,0);
			\draw [->, black] (0,0) -- (0,2);
			\draw [->, black] (0, -0.5) -- (6.4,-0.5);
			
			\draw [black] (0, 2) node [right] {$\Lambda^h(t)$};
			\draw [black] (6.4, 0) node [right] {$t$};
			\draw [black] (6.4, -0.5) node [right] {$N^h$};
			\draw [black] (0, 1) node [left] {$\lambda$};
			\draw plot[mark=+] (0, 1) ;
			
			\coordinate (saut1) at (0.2,0);
			\coordinate (saut2) at (0.41,0);
			\coordinate (saut3) at (0.74,0);
			\coordinate (saut4) at (0.92,0);
			
			\coordinate (saut5) at (3.1,0);
			
			\coordinate (saut6) at (5.47, 0);
			\coordinate (saut7) at (5.9,0);			
			
			\draw [thick, blue] (0,1) -- ($(saut1) + (0.8,1)$) -- ($(saut1) + (0.8,0)$) -- ($(saut4) + (2.1,0)$) -- ($(saut4) + (2.1,1)$) -- ($(saut5) + (0.8,1)$) -- ($(saut5) + (0.8,0)$) -- ($(saut5) + (2.1,0)$) -- ($(saut5) + (2.1,1)$) -- ($(saut6) + (0.8,1)$) -- ($(saut6) + (0.8,0)$) -- (6.4, 0);
			
			\draw [<->,dashed,darkgray] ($(saut1)+(0,1.1)$) --  ($(saut1)+(0.8,1.1)$) ;
				\draw [black] (0.7, 1.25) node  {$r$};
			\draw [dashed, red] ($(saut1) + (0, -0.5)$) -- ($(saut1) + (0, 1)$)
			($(saut2) + (0, -0.5)$) -- ($(saut2) + (0, 1)$)
			($(saut3) + (0, -0.5)$) -- ($(saut3) + (0, 1)$)
			($(saut4) + (0, -0.5)$) -- ($(saut4) + (0, 1)$)
			($(saut5) + (0, -0.5)$) -- ($(saut5) + (0, 1)$)
			($(saut6) + (0, -0.5)$) -- ($(saut6) + (0, 1)$)
			($(saut7) + (0, -0.5)$) -- ($(saut7) + (0, 1)$);
			
			\draw [thin, red] plot[mark=x] (0.2, -0.5)
			plot[mark=x] (0.41, -0.5)
			plot[mark=x] (0.74, -0.5)
			plot[mark=x] (0.92, -0.5)
			plot[mark=x] (3.1, -0.5)
			plot[mark=x] (5.47,-0.5)
			plot[mark=x] (5.9,-0.5);
			\end{tikzpicture}
			\caption{Example of Hawkes process : canceling intensity with a delay, $h=-\lambda \, \1_{[r,r+A]}$.}
		\end{subfigure}
		\caption{Comparison of Hawkes processes with or without a delay in the canceling of the intensity:\\ In blue, the intensity function $t \mapsto \Lambda^h(t)$; in red, the jumps times. The axis below indicates the Dirac measures of the process.}
		\label{fig_comparison_cancel}
	\end{figure}
\end{center}

We can summarize the results of this two cases and apply Theorem \ref{thm:lln} to obtain 
\begin{prop}\label{prop:examples}
Let us consider $A>0$ and $r\ge 0$.
The Hawkes process associated with $h=-\lambda\, \1_{[r,r+A]}$ satisfies
$$\lim_{t\to\infty}\frac{N^{h}_t}{t}= \frac{\lambda(1+\lambda r)}{\lambda A +2\lambda t +\e^{-r}}\quad a.s.$$
\end{prop}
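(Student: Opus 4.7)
The strategy is to apply Theorem~\ref{thm:lln}, which reduces the problem to the computation of $\E[W_1]$ and $\E[\tau_1]$ for the kernel $h=-\lambda\1_{[r,r+A]}$. By Proposition~\ref{lem_tauW_iid_loiexp}(ii), the first jump $V_1:=U_1^1$ of $N^h$ is $\Exp(\lambda)$-distributed; denoting $V_1<V_2<\cdots$ the successive jump times, the pair $(\tau_1,W_1)$ is determined entirely by what happens on $[V_1,\tau_1]$.

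My first observation is that, since the support of $h$ begins at $r$, no jump $V_i$ contributes to the intensity before time $V_i+r$. Consequently, on the open interval $(V_1,V_1+r)$ the intensity is identically equal to $\lambda$, so that the number $K'$ of jumps of $N^h$ falling in $(V_1,V_1+r]$ is Poisson$(\lambda r)$, independent of $V_1$; moreover, conditionally on $K'=k$, the increments $V_2-V_1,\dots,V_{k+1}-V_1$ are the order statistics of $k$ i.i.d.\ uniform variables on $(0,r)$.

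My second observation, which is the main structural step, is that the standing assumption $A>r$ forces the $K:=1+K'$ inhibition blocks $[V_i+r,V_i+r+A]$, $i=1,\dots,K$, to chain together without gap and cover the whole interval $[V_1+r,V_K+r+A]$; indeed $V_{i+1}-V_i\le r<A$ means each block begins before the preceding one ends. On this interval the intensity vanishes identically, so no further jump occurs before $V_K+r+A$, and the renewal condition $N^h((t-r-A,t])=0$ is met for the first time precisely at $t=V_K+r+A$. Hence $W_1=K$ and $\tau_1=V_K+r+A$. This is the only really delicate part of the argument; everything else is a direct moment computation.

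Indeed, one immediately has $\E[W_1]=1+\lambda r$. Setting $M:=V_K-V_1$, which equals $0$ on $\{K'=0\}$ and otherwise has the distribution of the maximum of $K'$ i.i.d.\ uniforms on $(0,r)$, one gets $\E[M]=r\,\E\bigl[K'/(K'+1)\bigr]$; a standard reindexing of the Poisson series gives $\E\bigl[(K'+1)^{-1}\bigr]=(1-\e^{-\lambda r})/(\lambda r)$, whence $\E[M]=r-(1-\e^{-\lambda r})/\lambda$. Combining with $\E[V_1]=1/\lambda$ one finds $\E[\tau_1]=2r+A+\e^{-\lambda r}/\lambda$, and dividing the two expectations produces the announced limit.
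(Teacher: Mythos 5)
Your proposal is correct and follows essentially the same route as the paper: the structural heart is identical, namely that under $A>r$ the inhibition intervals $[U^1_i+r,U^1_i+r+A]$ overlap, so that $W_1$ is exactly $1$ plus the number of points in $(U^1_1,U^1_1+r]$ (hence $W_1-1\sim\mathcal{P}(\lambda r)$) and $\tau_1=U^1_{W_1}+r+A$, after which Theorem \ref{thm:lln} is applied with $\E[W_1]=1+\lambda r$ and $\E[\tau_1]=2r+A+\lambda^{-1}e^{-\lambda r}$. The only minor difference is how $\E[U^1_{W_1}-U^1_1]$ is evaluated: you use conditional uniformity of the Poisson points on $(U^1_1,U^1_1+r)$ and the mean of the maximum of $k$ i.i.d.\ uniforms, whereas the paper derives the full law $e^{-\lambda r}\delta_0+\lambda e^{-\lambda(r-t)}\1_{(0,r]}(t)\,dt$ from exponential interarrival times; both give $r-\lambda^{-1}(1-e^{-\lambda r})$, and your limit $\lambda(1+\lambda r)/(\lambda A+2\lambda r+e^{-\lambda r})$ agrees with the paper's proof (the ``$2\lambda t$'' and ``$e^{-r}$'' in the displayed statement are typographical errors).
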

\begin{rmq} \quad This result naturally leads to some comments on the issues brought by inhibition.
\begin{itemize}
\item Let us first remark that as $r\to 0$ we recover the result of the canceling intensity case given in Remark \ref{rem:cassimple}.
\item Secondly we wonder whether one of both examples admits more points asymptotically. Therefore we are lead to study the ratio
$$\frac{\frac{\lambda}{\lambda A+1}}{\frac{\lambda(1+\lambda r)}{\lambda A +2\lambda r +e^{-\lambda r}}}=\frac{\lambda A +2\lambda r +e^{-\lambda r}}{(\lambda A+1)(1+\lambda r)},$$
or equivalently the sign of 
$$e^{-\lambda r} - 1+\lambda r -\lambda^2Ar=\lambda^2 r \left(\frac{r}{2}-A\right)+\sum_{k=3}^\infty \frac{(-\lambda r)^k}{k!}.$$ 
using the series expansion of the exponential. We therefore deduce that since $A>r$, the right hand side is negative, and thus the ratio is less that $1$. Consequently, this proves that the lag induces asymptotically more points in the Hawkes process. Notice that $||h||_1$ is the same in both cases. \\
From this we deduce that the formula \eqref{eq:lln_positive} is not true in the case of signed  reproduction function $h$. 
\end{itemize}
\hfill $\diamondsuit$
\end{rmq}

\begin{proof}[Proof of Proposition \ref{prop:examples}]
 Let us study $\Lambda^h$ on the time interval $[U_1^1, U_1^1+r+A]$:
\begin{itemize}
\item for $t \in [U^1_1, U^1_1+r)$, then for any $u\in(0,t)$, $t-u$ belongs to $(0,t)$ and thus 
$$\Lambda^h(t) = \left( \lambda + \int_0^t - \lambda \1_{(t-u) \in [r, r+A]} N^h(du) \right)^+ = \lambda,$$
\item for $t \in [U^1_1 +r, U^1_1 + r +A]$, then $$\Lambda^h(t)= \left( \lambda + \int_{(0, t)} - \lambda \1_{(t-u) \in [r, r+A} N^h(du) \right)^+ \leq \left( \lambda - \lambda \1_{(t-U^1_1) \in [r, r+A]}  \right)^+ = 0 \, .$$
\end{itemize}
From this, we deduce that all the points of $N^h$ in $]U_1^1, U_1^1+r+A]$ actually belong to the interval $]U_1^1, U_1^1+r[$. In particular, if $N^h$ has no points in $]U_1^1, U_1^1+r+A]$, then $W_1=1$ and $\tau_1=U_1^1+r+A$.\\
Let us now remark that $N^h(]U_1^1, U_1^1+r])$ follows a Poisson law of parameter $\lambda r$ since the intensity of Hawkes process is constant on this interval. In particular $N^h([U_1^1, U_1^1+r])$ is finite almost surely. 
More generally for any $1 < k \leq N^h([U_1^1, U_1^1+r])$, then $U^1_{k} \leq U^1_1 + r$ and 
$$\forall t \in [U^1_k +r, U^1_k + r +A],\quad \Lambda^h(t) = 0.$$ 
Finally, since $A>r$ we have that $U^1_k+r \le U_1^1+r+r\le U_1^1+r+A$, and thus the intensity remains null on the interval $[U_1^1+r, U^1_k + r +A]$.
\\ We can conclude that 
\begin{align}
&W_1=N^h([U_1^1, U_1^1+r]),\label{eq:ex2_W}\\
&\tau_1=U_{W_1}^1 + r+A.\label{eq:ex2_tau}
\end{align}
%
%
%
Since the first point in $N^h $ in the interval $[U^1_1, U^1_1+r]$ is in $U_1^1$ we actually have
$$W_1 = 1 + N^h((U^1_1, U^1_1+r)).$$
It follows that $$W_1 - 1 \sim \mathcal{P}(\lambda r)$$ and  $$\E(W_1)=1 + \lambda r\,, \quad \Var(W_1)= \lambda r \quad \textrm{ and } \quad \theta_0=+\infty \, .$$

We finally study the law of $\tau_1$. From Equation \eqref{eq:ex2_tau}, we can write 
\begin{align*}
\tau_1 &= U^1_{W_1} + r+A\\
&= r+A+ U^1_1 + (U^1_{W_1} - U^1_1),
\end{align*} 
where $U^1_1 \sim \Exp(\lambda)$ by lemma \ref{lem_tauW_iid_loiexp} and $U^1_1$ and $(U^1_{W_1} - U^1_1)$ are independent.\\ It remains to study the law of $(U^1_{W_1} - U^1_1)$. 

Thanks to \eqref{eq:ex2_W}, $0\le U^1_{W_1}-U^1_1\le r$.

Let $t \in [0, r]$, we have: 
$$\Proba \left( 0 \leq U^1_{W_1} - U^1_1 \leq t \right)= \sum_{k=1}^{\infty} \Proba \left( \{W_1 = k \} \cap \{U^1_{W_1} - U^1_1 \leq t \} \right).$$ For $k=1$: $\Proba \left( \{W_1 = 1\} \cap \{U^1_{W_1} - U^1_1 \leq t\} \right) = \Proba \left( W_1 = 1 \right) = \e^{-\lambda r}$.

For $k >1$ since the intensity of the Hawkes process remains constant equal to $\lambda$ on $[U^1_1, U^1_{W_1}]$ we can write 
$$U^1_{W_1} - U^1_1 \overset{(law)}{=}\sum_{k=1}^{W_1-1} T_k$$ where $(T_k)_{k\in\N}$ is a sequence of i.i.d  $\mathcal{E}(\lambda)$. We can consider $(T_k)_{k\in\N}$ as the interarrival times of a Poisson process of parameter $\lambda$ coupled with our Hawkes process, as in Proposition \ref{prop_EDS}. Then, $T_0 = U^1_1$, and $(T_k)_{k\geq W_1}$ are defined. Then: 
\begin{align*}
\Proba \left( \{W_1 = k \} \right.&\left.\cap \{U^1_{W_1} - U^1_1 \leq t\} \right)
= \Proba\left( \left\lbrace 0 \leq \sum_{i=1}^{k-1} T_i \leq t \right\rbrace \cap \left\lbrace T_{k} + \sum_{i=1}^{k-1} T_i > r \right\rbrace \right) \\
&= \E \left[ \1_{0 \leq \sum_{i=1}^{k-1} T_i \leq t} \Proba\left( T_{k} + \sum_{i=1}^{k-1} T_i > r \mid (T_1, ..., T_{k-1})  \right)    \right]\\
&= \E \left[ \1_{0 \leq \sum_{i=1}^{k-1} T_i \leq t}  \e^{-\lambda\left(r-\sum_{i=1}^{k-1} T_i\right)}  \right]\\
&= \int_{(\R^+)^{k-1}} \1_{0 \leq \sum_{i=1}^{k-1} s_i \leq t} \; \lambda^{k-1} \e^{-\lambda \sum_{i=1}^{k-1} s_i} \times \e^{-\lambda(r-\sum_{i=1}^{k-1} s_i)} ds_2 ... ds_k\\
&= e^{-\lambda r} \lambda^{k-1} I_{k-1}(t)\\& = e^{-\lambda r} \frac{(\lambda t)^{k-1}}{(k-1)!} \,. 
\end{align*}
with $$I_k(t):= \int_{(\R^+)^k} \1_{0 \leq \sum_{i=1}^k s_i \leq t} ds_1 ... ds_k = \frac{t^k}{k!} \, .$$ Thus  
\begin{align*}
\Proba \left( 0 \leq U^1_{W_1} - U^1_1 \leq t \right)
&= \e^{-\lambda r} + \sum_{k \geq 2} e^{-\lambda r} \frac{\left(\lambda t\right)^{k-1}}{(k-1)!}\\
&= \e^{-\lambda(r - t)}.
\end{align*}
Hence the distribution of $U^1_{W_1} - U^1_1$ is given by $\e^{-\lambda r} \delta_0 + \lambda \e^{- \lambda (r-t)} \1_{(0, r]}(t) dt$.

An easy computation gives $\E(U^1_{W_1} - U^1_1) = r - \frac{1}{\lambda} (1 - \e^{-\lambda r})$. 
Finally we obtain that
\begin{align*}
\E(\tau_1) &= r+A + \E(U^1_1) + \E(U^1_{W_1} - U^1_1)\\
& = r +A + \lambda^{-1}+ r - \lambda^{-1} (1 - \e^{-\lambda r})\\
&=2 r +A + \lambda^{-1} \e^{-\lambda r}
\end{align*}
And from the independence of $U^1_1$ and $U^1_{W_1} - U^1_1$ we conclude that 
\begin{align*}\Var(\tau_1) &= \Var(r+A+U^1_1 + (U^1_{W_1} - U^1_1)\\
&=\Var(U^1_1)+ \Var(U^1_{W_1} - U^1_1)\\
&=\lambda^{-1} + (r^2 - 2 \lambda^{-1} m - m^2)
\end{align*}
From Theorem \ref{thm:lln} we obtain the following LLN  
$$\frac{N^h_t}{t} \cvg[a.s.]{t}{\infty} \frac{1+\lambda r}{2 r + A+ \lambda^{-1} \e^{-\lambda r}} = \frac{\lambda(1+\lambda r)}{\lambda A +1 +(\e^{-\lambda r}-1) +2\lambda r}.
$$

\end{proof}

\section{Proofs.}
\label{sec:proof}
\subsection{Proofs of Proposition \ref{prop:tau_w_h_negative} and \ref{propWexpnegative}}
\label{sec_proof_moment}
We start by proving that the random variables $\tau$ and  $W$ admit exponential moments.
\begin{proof}[Proof of Proposition \ref{prop:tau_w_h_negative}] \quad
\smallskip

Let $h$ be a signed measurable function and $h^+$ its positive part. We generate $N^h$ and $N^{h^+}$ by coupling as in Proposition \ref{prop_EDS}. Recall that $\|h^+\|_1<1$. 

We denote by $W_i$, $\tau_i$, $S_i$, ...  (respectively $W_i^+$, $\tau_i^+$, $S_i^+$, ... )  the renewal quantities associated to $N^h$ (resp. $N^{h^+}$). \emph{Be careful that the previous construction of $W_i^+$, $\tau_i^+$, $S_i^+$ is done by using intervals of length $L(h)$ not $L(h^+)$. Notice that since $L(h)\ge L(h^+)$, then the renewal structure is well defined for $N^{h^+}$. Moreover, if $h\leq 0$, $L(h^+)=0$, one can replace $h^+$ by $h^+_\varepsilon=h^+ + \varepsilon \, \1_{[0,L(h)]}$ and then let $\varepsilon$ go to $0$ in order to compare with \cite{costa}. }

Thanks to Proposition \ref{prop_EDS}, we have $N^h \leq N^{h^+}$ a.s. We also know that $U^{1}_1 = U^{+,1}_1$. 

Moreover, $\tau_1 \leq \tau^{+}_1$ a.s. because the jumps of $N^h$ are included in those of $N^{h^+}$. We also have $W_1 = N^h([0, \tau_1]) \leq N^h([0, \tau^+_1]) \leq N^{h^+} ([0, \tau^+_1]) = W^{+}_1$ a.s. So $W_1 \leq W^+_1 a.s.$
\medskip

\emph{Study of $N^{h^+}$:}~\\
First, we focus on $N^{h^+}$. According to \cite{costa}, we can associate a $M/G/\infty$ queue to $N^{h^+}$. To do this, we consider: 
\begin{align*}
\Lambda^{h^+}(t) = \lambda + \int_{(-L(h), t]} h^+(t-u) N^{h^+}(du).
\end{align*}
We can consider the Hawkes process as the sum of:  
\begin{itemize}
	\item the arrivals of ancestors $V_k$ at rate $\lambda$ and  
	\item a continuous time Galton-Watson process beginning at each $V_k$ with the following characteristics: the number of descendants follows a Poisson distribution with mean $\|h^+\|_1$ and the times of births have the density $h^+/\|h^+\|_1$.
\end{itemize}
In fact, to each arrival of an ancestor $V_k$, we can associate a time $H_k$ corresponding to the life time of the cluster of $V_k$. $V_k$ is independent of $H_k$ and the $(H_k)_k$'s are independent. 

We can associate to this process a queue in the following way:  
\begin{itemize}
	\item the customers are the ancestors and arrive at rate $\lambda$,
	\item the service time for each customer is $H_k + L(h)$.
\end{itemize}
We denote by $Y_t$ the number of customers in the queue at time $t$: 
\begin{align*}
Y_t = \sum_k \1_{V_k \leq t < V_k + H_k + L(h)}.
\end{align*} 
Let $\mathcal{T}^+_1 = \inf\left\lbrace t \geq 0, Y_{t-} \neq 0, Y_t = 0 \right\rbrace$, be the first time the queue is empty. By  proposition 2.6 of \cite{costa}, we have: 
\begin{align*}
\forall	\alpha < \alpha_0:= \min\left(\lambda, \frac{\|h^+\|_1- \log(\|h^+\|_1) - 1}{L(h)} \right), \quad \textrm{ it holds } \quad \E[\e^{\alpha \mathcal{T}^+_1}] < \infty \, .
\end{align*}
Of course $\lambda >0$ and $\|h^+\|_1- \log(\|h^+\|_1) - 1 > 0$, and so $\mathcal{T}^+_1$ admits an exponential moment. 

Since $\tau^+_1$ is the first time after $U^{+, 1}_1$ such that there were no jump during a time $L(h)$. Thus $\tau^+_1 = \mathcal{T}^+_1$ and since $\tau_1 \leq \tau_1^+$, part (i) of the proposition is proved.
\medskip

In order to prove (ii) it is enough to show that the distribution of $W_1^+$ admits exponential moments. Recall that $$W_1^+ = N^{h^+}([0,\tau_1^+]) \, .$$ According to \cite{bordenave} (see proof of Theorem 3.2 and proof of Theorem 3.4 therein), $$\lim_{t \to +\infty} \, \frac 1t \, \ln \mathbb E\left(e^{\theta N^{h^+}([0,t])}\right) = \lambda(\mathbb E(e^{\theta \mathcal{S}}) -1) :=\mu(\theta) < +\infty$$ as soon as $\theta < \|h^+\|_1- \log(\|h^+\|_1) - 1$. Here $\mathcal{S}$ is distributed according to (see (3) in \cite{bordenave}) $$\mathbb P(\mathcal{S}=k) = \frac{e^{-k\|h^+\|_1}(k\|h^+\|_1)^{k-1}}{k!} \, .$$ It is thus immediate that $\mu(\theta)$ goes to $0$ as $\theta$ goes to $0$. 

For $\varepsilon >0$ we may thus choose $\theta$ small enough such that $$\alpha_0 - 2\varepsilon \geq \mu(2 \theta) + \varepsilon \, .$$ For this $\theta$, one can find $t_\theta$ such that for $t \geq t_\theta$, $$\mathbb E\left(e^{2 \theta N^{h^+}([0,t])}\right) \leq e^{t (\mu(2\theta) +\varepsilon)} \, .$$ It follows
\begin{eqnarray*}
\mathbb E(e^{\theta W_1^+}) &=& \mathbb E\left(e^{\theta N^{h^+}([0,\tau_1^+])}\right)\\  &\leq& \sum_{k=1}^{\infty} \, \mathbb E\left(e^{\theta N^{h^+}([0,k])} \, \1_{k-1\leq \tau_1^+ < k}\right) \\ &\leq& \sum_{k=1}^{\infty} \, \left( \beta_k \, \mathbb E\left(e^{2\theta N^{h^+}([0,k])}\right) \, + \, \frac{1}{\beta_k} \mathbb P(k-1 \leq \tau_1^+) \right) \\ &\leq&  A(t_\theta) \, + \, \sum_{k=[t_\theta]+1}^{\infty} \, \left(\beta_k \, e^{k(\mu(2\theta)+\varepsilon)} \, + \, \frac{\mathbb E(e^{(\alpha_0-\varepsilon)\tau_1^+})}{\beta_k} \, e^{- (k-1) (\alpha_0 - \varepsilon)}\right)
\end{eqnarray*}
where $A(t_\theta)$ denotes the finite sum up to $k=[t_\theta]$. Choosing $\beta_k= k^{-2} \, e^{-k(\mu(2\theta)+\varepsilon)}$ the $k$'th term of the remaining sum is smaller than $1/k^2 + c \, k^2 \, e^{- \varepsilon (k-1)}$ and the series is thus convergent. Since $\varepsilon$ is arbitrary, (ii) follows. 
\end{proof}
\medskip

\begin{proof}[Proof of Proposition \ref{propWexpnegative}] \quad
\smallskip

We consider a process $N^h$, generated by the Poisson point process $Q$, as in the Proposition \ref{prop_EDS}. Since $h\leq 0$, we will couple (and upper-bound) this time the Hawkes process with the Poisson point process $\mathcal{R}$ on $\R^+$, with intensity $\lambda$, generated by the same Poisson point process $Q$ on $(0,\infty)^2$. Since $\forall t\ge0$ $$\lambda\ge \Lambda^h(t)\quad \text{a.s.}$$ we deduce that $$\mathcal{R}\ge N^h.$$
We can now upper bound the length of the first time window $\tau_1$ by a similar quantity associated with $\mathcal{R}$. Recall that $U^1_1$ is the first jump time of $N^h$ and define:
\begin{equation}\label{def:tau}\tau = \inf \{t>U_1^1, \mathcal{R}[t-L(h),t) \neq 0, \mathcal{R}(t-L(h),t] =0 \}.
\end{equation}
$\tau$ indicates the first moment such that there were no jump for the process $\mathcal{R}$ during an interval of length $L(h)$. In particular, there weren't jump for $N^h$ either. Therefore  $\tau_1 \leq \tau$ a.s. and $$W_1 = N^h([U^1_1, \tau_1]) \leq \mathcal{R}([U^1_1, \tau_1]) \leq \mathcal{R}([0, \tau]).$$ It is thus enough to get an upper bound for $\E\left(e^{\theta \, \mathcal{R}([0, \tau])}\right)$.

To this end we shall study the random variable $\tau$. Denote by $V_i$ the jumps of the Poisson point process $\mathcal{R}$. From the definition there exists a random integer $K$ such that 
$$\tau = V_K + L(h).$$ 
The definition of $K$ leads to $$K=\mathcal{R}[0,\tau].$$
From the independence of the times between jumps of $\mathcal{R}$ we deduce that 
\begin{align*}
\Proba(K=1)=\Proba[\tau = V_1 +L(h)] &= \Proba[V_2 - V_1 \geq L(h)]\\
	&= \e^{- \lambda L(h)},\\
\Proba(K=2)=\Proba[\tau = V_2 + L(h)] &= \Proba[\{V_2- V_1 < L(h)\} \cup \{V_3-V_2 \geq L(h)\}]\\
	&= \Proba[V_2- V_1 < L(h)]\Proba[V_3-V_2 \geq L(h)]\\
	&= (1 - \e^{- \lambda L(h)}) \e^{- \lambda L(h)},\\
\forall	k \geq 2,\quad \Proba(K=k)= \Proba[\tau = V_k+L(h)] &= (1-\e^{- \lambda L(h)})^{k-1} \e^{- \lambda L(h)}.
\end{align*}
$K$ is a geometric random variable with parameter $e^{-\lambda L(h)}$ and thus admits exponential moments provided $e^\theta \, (1-e^{-\lambda L(h)}) < 1$ which concludes the proof.
\end{proof}
\bigskip

\subsection{Proof of the LLN and CLT}

\begin{proof}[Proof of Theorem \ref{thm:lln} and Theorem \ref{thm:clt}] \quad
\smallskip

Recall that $$\hat{N}^h_t \leq N^h_t \leq \hat{N}^h_t + W_{M^h_t+1} \quad \text{a.s.}$$ where $$\hat{N}^h_t := \sum_{i =1}^{\infty} W_i \1_{S_{i} \leq t} = \sum_{i=1}^{M^h_t} W_i$$ and $$M_t^h := \, \sum_{i=1}^{\infty} \, \1_{S_i\leq t} \, ,$$ as explained in \eqref{eqcumul1}, \eqref{eqcumul2}, \eqref{eqcumul3}.

As we previously said Theorem 3.1 and Theorem 3.2 Chapter 6 in \cite{asmussen}  furnish $$ \frac{\hat{N}^h_t}{t} \cvg[a.s.]{t}{\infty} \frac{\E[W_1]}{\mathbb E(\tau_1)} \, $$ and $$\sqrt t \, \left(\frac{\hat{N}^h_t}{t} \, -  \, \frac{\E[W_1]}{\mathbb E(\tau_1)}\right) \underset{t\to\infty}{\Longrightarrow} \mathcal{N}(0,\sigma^2)$$with $$\sigma^2 = \frac{\Var\left(W_1-\tau_1 \, \frac{\E[W_1]}{\mathbb E(\tau_1)}\right) }{\mathbb E(\tau_1)} \, .$$ 
It is thus enough to control the remaining (or error) term $W_{M^h_t+1}$ i.e to prove
\begin{equation}
\label{eq:lln_reste}
\lim_{t\to\infty} \frac{W_{M^h_t+1}}{t} =0 \quad \text{a.s.},
\end{equation}
and 
\begin{equation}\label{eqrestproba}
\lim_{t\to\infty} \frac{W_{M^h_t+1}}{\sqrt t} =0 \quad \text{in Probability}.
\end{equation}
\eqref{eq:lln_reste} will conclude the LLN and \eqref{eqrestproba} the CLT.
\medskip

Actually we will prove stronger results. Let $\beta(n)$ an increasing sequence going to infinity and 
$\varepsilon >0$. Introduce the independent events $A_n = \{W_n > \varepsilon \beta(n) \}$. Then $\limsup_n A_n = \{\limsup_n \frac{W_n}{\beta(n)} > \varepsilon \}$. Since the $(W_i)_i$ are finite i.i.d random variables
$$\sum_{n} \Proba(A_n) = \sum_{n} \Proba \left(W_n > \varepsilon \beta(n)\right) = \sum_n \Proba (W_1 > \varepsilon \beta(n) ) \, .$$ Thanks to Proposition \ref{prop:tau_w_h_negative} and to Markov inequality, we know that for $$\mathbb P(W_1 > \varepsilon \beta(n)) \leq \, \mathbb E[e^{\theta_0 W_1}] \, e^{-\theta_0 \, \varepsilon \, \beta(n)} \, .$$ We may now apply Borel-Cantelli, telling that provided $\sum_n \, e^{-\theta_0 \, \varepsilon \, \beta(n)} < +\infty$, $$\Proba(\limsup_n A_n) = 0.$$ The previous holds with $\beta(n)=n^\alpha$ for any $\alpha >0$. We have proved in particular that $$\frac{W_n}{\sqrt n} \cvg[a.s.]{n}{\infty} 0 \, .$$
Since $M^h_t$ is a non-decreasing family of integers going to infinity almost surely, $$\frac{W_{M^h_t+1}}{\sqrt{M^h_t+1}} \cvg[a.s.]{t}{\infty} 0 \, .$$ It remains to recall that 
\begin{equation}\label{eq:lim2}
\frac{M^h_t +1}{t} \cvg{t}{\infty} \frac{1}{\mathbb E(\tau_1)} \, \textrm{ a.s.},
\end{equation}
to conclude that $$\frac{W_{M^h_t+1}}{\sqrt{t}} \cvg[a.s.]{t}{\infty} 0 \, .$$
\end{proof}
\bigskip
\subsection{Proof of Corollary \ref{corLD}}
\begin{proof}[Proof of Corollary \ref{corLD}] \quad

In order to prove  the first part of Corollary \ref{corLD} it is enough to show that  $\hat N^h_t/t$ and $N^h_t/t$ are exponentially equivalent, i.e. that for each $\delta >0$, $$\limsup_{t \to \infty} \, \frac 1t \, \ln \mathbb P\left(\left|\frac{N_t^h}{t} \, - \, \frac{\hat N_t^h}{t}\right| > \delta\right) \, = \, - \, \infty \, .$$ To this end it is enough to show that 
\begin{equation}\label{eqldequiv}
\limsup_{t \to \infty} \, \frac 1t \, \ln \mathbb P\left(W_{M_t^h+1} > \delta t\right) = - \, \infty \, .
\end{equation}

We will decompose the probability state into two events: $M^h_t \leq t^2$ and $M^h_t >t^2$. It holds 
	\begin{align*}
	\Proba \left(W_{M^h_t+1} > \delta \, t \right) &\leq \Proba \left( M^h_t > t^2 \right) + \Proba \left( \left\{ W_{M^h_t+1} > \delta t \right\} \cap \left\{ M^h_t \leq t^2  \right\}  \right)\\
	&\leq \Proba \left( M^h_t > t^2 \right) + \Proba \left( \left\{ \exists k \in \{1, ..., \lfloor t^2 +1 \rfloor \}, W_{k} > \delta t \right\} \cap \left\{ M^h_t \leq t^2 +1  \right\} \right)\\
	&\leq \Proba \left( M^h_t > t^2 \right) + \Proba \left(\exists k \in \{1, ..., \lfloor t^2 +1 \rfloor \}, W_{k} > \delta t \right)\\
	&\leq \Proba \left( M^h_t > t^2 \right) + \sum_{j=1}^{\lfloor t^2 +1 \rfloor}\Proba \left(W_{k} > \delta t \right)\\
	&\leq \Proba \left( M^h_t > t^2 \right) + (t^2+1) \Proba \left(W_1 > \delta t \right). 
	\end{align*}
	
	On one hand, we have, by Markov's inequality, for all $\theta_0 >\theta >0$, $$\Proba\left(  W_{1} > \delta t \right) \leq \E[\e^{\theta W_1}] \, \e^{- \theta \delta t}$$ so that $$\limsup _{t \rightarrow + \infty} \frac{1}{t} \ln \Proba\left(  W_{1} > \delta t \right) \leq - \theta_0 \delta. $$

On the other hand, according to \cite{tiefeng} Theorem 2.3, for all $x >0$: 
	\begin{align*}
	\varlimsup_{t \rightarrow + \infty} \frac{1}{t} \ln \Proba \left(\frac{M^h_t}{t} \geq x \right) \leq - J_{\tau_1}(x), 
	\end{align*}
where $J_{\tau_1}(x) = \sup_{\eta} \{\eta - x \ln \E[\e^{\eta \tau_1}]\}$. \\ Since $\eta \mapsto \mathbb E(e^{\eta \tau_1})$ is continuous on $\mathbb R^-$ there exists some $\eta_0$ such that $\mathbb E(e^{\eta_0 \tau_1})=e^{-1}$. It follows $J_{\tau_1}(x) \geq \eta_0 +x$.\\
Choose $t_1, t_2, ...$ an increasing sequence of times such that $t_i \cvg{i}{+\infty} + \infty$. For a fixed $i$, we have for $t$ large enough
	\begin{align*}
	\Proba \left( M^h_t > t^2 \right) = \Proba \left(\frac{M^h_t}{t} > t\right) \leq \Proba \left( \frac{M^h_t}{t} > t_i \right)\\
	\end{align*}
Since 
	\begin{align*}
	\limsup_{t \rightarrow + \infty} \frac{1}{t} \ln \Proba \left( \frac{M^h_t}{t} > t_i \right) \leq - J_{\tau}(t_i) \leq - t_i - \eta_0. 
	\end{align*}
It follows, 
	\begin{align*}
	\limsup_{t \rightarrow + \infty} \frac{1}{t} \ln \Proba \left( \frac{M^h_t}{t} > t \right) = - \infty. 
	\end{align*}
Eventually, 
	\begin{align*}
	\limsup_{t \rightarrow + \infty}& \frac{1}{t} \ln \Proba \left( \frac{W_{M^h_t+1}}{t} > \delta  \right) \\
	&\leq \limsup_{t \rightarrow + \infty} \frac{1}{t} \ln \left[ \Proba \left( M^h_t > t^2 \right) + (t^2+1) \Proba \left(W_1 > \delta t \right)\right]\\
	&\leq \limsup_{t \rightarrow + \infty} \left( \frac{\ln 2}{t} + \max \left[ \frac{1}{t} \ln \Proba \left( \frac{M^h_t}{t} > t \right), \frac{1}{t} \ln \left((t^2+1) \Proba \left(W_1 > \delta t \right)\right)   \right] \right)\\
	&\leq \max \left[ \limsup_{t \rightarrow + \infty} \frac{1}{t} \ln \Proba \left( \frac{M^h_t}{t} > t \right), \limsup_{t \rightarrow + \infty} \left( \frac{ \ln (t^2+1)}{t} + \frac{1}{t} \ln \Proba \left(W_1 > \delta t \right) \right)  \right]\\
	& \leq - \theta_0 \, \delta \, .
	\end{align*}
This completes the proof for $\theta_0=+ \infty$. 
\medskip

Let us now assume $\theta_0<\infty$. Recall that $m= \frac{\mathbb E(W_1)}{\mathbb E(\tau_1)}$, then \eqref{eqaudessus} is a consequence of the following line of reasoning: 
\begin{eqnarray*}
\mathbb P\left(\frac{N^h_t}{t} \,  > \, m+ a \right) &\leq& 
\mathbb P\left(\frac{\hat N^h_t}{t} \, + \frac{W_{M_t^h+1}}{t} > \, m+ a \right) \\ &\leq& \mathbb P\left(\frac{\hat N^h_t}{t} \, > \, m+ \kappa_1 a \right) +  \mathbb P\left(\frac{W_{M_t^h+1}}{t} > (1-\kappa_1) a \right) 
\end{eqnarray*}
where $\kappa_1\in(0,1)$, yielding 
\begin{align*}
\limsup_t &\, \frac 1t \, \ln\mathbb P\left(\frac{N^h_t}{t} \,  > \, m+ a \right) \\
&\leq \max\left( \, \limsup_t \, \frac 1t \, \ln \mathbb P\left(\frac{\hat N^h_t}{t} \,  > \, m+ \kappa_1 a \right) \, , \, \limsup_t \, \frac 1t \, \ln \mathbb P\left(\frac{W_{M_t^h+1}}{t} > (1-\kappa_1) a \right) \right)
\end{align*}Now applying Theorem \ref{thmLD} with $\kappa_2$ and $(1-\kappa_2)$ instead of $1/2$, we deduce that
\begin{align*}
\limsup_t &\, \frac 1t \, \ln\mathbb P\left(\frac{N^h_t}{t} \,  > \, m+ a \right) \\
&\leq \max \left( - \inf_{z-m \geq   \kappa_2 \kappa_1 a} \, J(z) \, , \, - (1-\kappa_2) \kappa_1 \theta_0  a\,,\,  - (1 - \kappa_1) a \theta_0  \right)
\end{align*}
 yielding the result with $\kappa = \kappa_1 \kappa_2$ and $\kappa' = 1 - \kappa_1$. The condition $\kappa + 2 \kappa' =1$ arises from the equality of the last two terms

Finally, \eqref{eqendessous} is a consequence of the same reasoning on $\hat{N}^h_t \leq N^h_t$: 
\begin{eqnarray*}
	\mathbb P\left(\frac{N^h_t}{t} \,  < \, m- a \right) &\leq& 
	\mathbb P\left(\frac{\hat N^h_t}{t} \, < \, m - a \right) 
\end{eqnarray*}
 yielding the result.
\end{proof}
\medskip

\section{Final comments.}

As we recalled in the introduction, in the linear case the LLN, the CLT and the LDP are completely characterized by $||h||_1$. As we have shown in Section \ref{secexample}, the ``almost linear case with inhibition'' we are looking at is dramatically different, since the results do not only depend even on the moments of $h$. The renewal description of the Hawkes process we have used allows us to characterize all these limit theorems in terms of the joint law of $(\tau_1,W_1)$. It should be very interesting to link this distribution with $h$. As for the non linear self-excited case such a goal seems difficult to reach.

Another interesting direction should be to obtain non asymptotic deviation bounds (or concentration properties). Since the Large Deviation Principle for cumulative processes we have proved in \cite{CCC2} is based on the contraction of a higher level LDP, new methods are necessary for non asymptotic results.

The methods of the paper can be used for more general jump rate functions $f$, provided one can generalize the construction of the sequence $(\tau_i,W_i)$. This generalization is partly done in \cite{graham} in which a regenerative structure is exhibited without the assumption of bounded support for the reproduction function $h$ and in \cite{raads} which exhibit renewal points for non linear Hawkes processes and age-dependent Hawkes processes.
\bigskip

\end{document}